\begin{document}
\newcommand{\mb}{\mathbb}
\newcommand{\mf}{\mathfrak}
\newcommand{\mc}{\mathcal}
\newcommand{\mbf}{\mathbf}
\newcommand{\im}{{\rm im}}
\newcommand{\del}{\partial}
\newcommand{\coker}{{\rm coker}}
\newcommand{\Tr}{\operatorname{Tr}}
\newcommand{\vol}{\operatorname{vol}}
\newcommand{\area}{\operatorname{area}}
\newcommand{\diag}{\text{diag}}

\newcommand{\ZZ}{\mathbb{Z}}
\newcommand{\NN}{\mathbb{N}}
\newcommand{\QQ}{\mathbb{Q}}
\newcommand{\RR}{\mathbb{R}}
\newcommand{\CC}{\mathbb{C}}
\newcommand{\EE}{\mathbb{E}}
\newcommand{\PP}{\mathbb{P}}
\newcommand{\TT}{\mathbb{T}}

\def\eps{\epsilon}
\def\nn{\mathbb{N}}
\def\e{\mathrm{e}}
\def\mAt{m_{A, t}}
\def\i{\mathrm{i}}
\def\rhoAt{\rho_{A, t}}
\def\rhosc{\rho_{\mathrm{sc}}}
\def\gamAit{\gamma_{A, i, t}}
\def\D{\mathcal{D}}
\def\msc{m_{\mathrm{sc}}}
\def\d{\mathrm{d}}
\def\O{\mathcal{O}}
\def\F{\mathcal{F}}
\def\rhoHonet{\rho^{(H)}_{ t} }
\def\gamHonet{\gamma^{(H)}_{i, t} }
\def\ev{\boldsymbol{e}}
\def\thetat{\vartheta_t}
\def\spc{\mathrm{spec}}
\def\mtwot{m_{2, t}}
\def\mutwot{\mu_{2, t}}
\def\f{f}
\def\M{B}
\def\m{b}

\theoremstyle{plain}
  \newtheorem{theorem}{Theorem}[section]
  \newtheorem{proposition}[theorem]{Proposition}
  \newtheorem*{propositionnonumber}{Proposition}
  \newtheorem{lemma}[theorem]{Lemma}
  \newtheorem{corollary}[theorem]{Corollary}
  \newtheorem{conjecture}[theorem]{Conjecture}
\theoremstyle{definition}
  \newtheorem{definition}[theorem]{Definition}
  \newtheorem{example}[theorem]{Example}
  \newtheorem{examples}[theorem]{Examples}
  \newtheorem{question}[theorem]{Question}
  \newtheorem{problem}[theorem]{Problem}
  \newtheorem{assumption}[theorem]{Assumption}
  \newtheorem{remark}[theorem]{Remark}

\newcommand{\cor}{\color{red}}

\begin{table}
\centering

\begin{tabular}{c}
\multicolumn{1}{c}{\Large{\bf Bulk universality of sparse random matrices}}\\
\\
\\
\end{tabular}
\begin{tabular}{c c c}
Jiaoyang Huang & Benjamin Landon & Horng-Tzer Yau\\
\\
 \multicolumn{3}{c}{ \small{Department of Mathematics} } \\
 \multicolumn{3}{c}{ \small{Harvard University} } \\
\small{jiaoyang@math.harvard.edu} & \small{landon@math.harvard.edu} & \small{htyau@math.harvard.edu}  \\
\\
\end{tabular}
\\
\begin{tabular}{c}
\multicolumn{1}{c}{\today}\\
\\
\end{tabular}

\begin{tabular}{p{15 cm}}
\small{{\bf Abstract:} We consider the adjacency matrix of the ensemble of Erd\H{o}s-R\'enyi random graphs which consists of graphs on $N$ vertices in which each edge occurs independently with probability $p$.  We prove that in the regime $pN \gg 1$ these matrices exhibit bulk universality in the sense that both the averaged $n$-point correlation functions and distribution of a single eigenvalue gap coincide with those of the GOE.  Our methods extend to a class of random matrices which includes sparse ensembles whose entries have different variances.}
\end{tabular}
\end{table}

\section{Introduction}

{\let\thefootnote\relax\footnote{The work of B.L. is partially supported by NSERC.  The work of H.-T. Y. is partially supported by NSF Grant DMS-1307444 and a Simons Investigators fellowship.}}The universality of the spectral statistics of random matrices has been a central subject since the pioneering works of Wigner \cite{wigner}, Gaudin \cite{gaudin}, Mehta \cite{mehta} and Dyson \cite{dyson}.  The first such universality result is the global semicircle law of Wigner which states that under some weak moment conditions, the empirical eigenvalue distribution of a matrix with i.i.d. entries converges weakly to the deterministic semicircle law
\begin{align}
\lim_{N \to \infty} \frac{1}{N} \sum_{i=1}^N \delta_{\lambda_i } \to \rhosc (E) := \frac{1}{2 \pi } 1_{ \{ |E| \leq 2 \} } \sqrt{ 4 - E^2 }
\end{align}
in the appropriate scaling.  

The Wigner-Dyson-Gaudin-Mehta conjecture, or `bulk universality' conjecture, states that the local statistics of the eigenvalues of random matrix ensembles should be universal in the sense that they depend only on the symmetry class of the random matrix ensemble but are otherwise independent of the law of the matrix entries.  Here, local statistics refers to the behaviour of the eigenvalues in the scaling in which their typical distance is order $1$.  

A prominent class of random matrices are Wigner matrices. These matrices have independent centered entries with a uniform subexponential decay condition and identical variances.  The Wigner-Dyson-Gaudin-Mehta conjecture for Wigner matrices was recently established in a series of papers \cite{erdos2010bulk, EKYYb, erdos2011universality, erdos2012rigidity, general, gap} for all symmetry classes.  Parallel results in various cases were obtained in \cite{tao2011random, tao2010random}.

The conclusion of the papers  \cite{erdos2010bulk, EKYYb, erdos2011universality, erdos2012rigidity, general, gap} was that Wigner matrices, and even the wider class of generalized Wigner matrices in which the variances of the entries may differ, exhibit bulk universality in the following two forms.  The first is that the $n$-point correlation functions are universal after averaging over a small energy window.  The second is that the distribution of the eigenvalue gaps with a fixed label are universal.  For Wigner matrices, the universality of the averaged $n$-point correlaton functions is equivalent to the universality of a local average of eigenvalue gaps.  However, there is no rigorous mathematical relation between the universality of the eigenvalue gaps with a fixed label and the universality of the $n$-point correlation functions at a fixed energy.  Universality at a fixed energy has recently been established for all symmetry classes in \cite{Homog}, but we will not be concerned with this type of convergence in this work.  

Wigner matrices were orginally introduced in \cite{wigner} by Wigner to model the spectra of heavy atoms, and are widely used to model systems in which all elements strongly interact with one another. However, for systems in which the links between different elements are broken, a better description is offered by the so-called sparse (or dilute) random matrices which have an average of $pN$ nonzero elements per row, for $p\ll 1$.  

Aside from theoretical physics models, sparse random matrices also arise in graph theory in the study of sparse random graphs. Perhaps the simplest example is the Erd\H{o}s-R\'enyi ensemble which consists of a random graph on $N$ vertices in which each edge is chosen independently with probability $p$. The adjacency matrix of this graph is called the Erd\H{os}-R\'enyi matrix.  The Erd\H{o}s-R\'enyi matrix has typically $pN$ nonzero entries in each column and  is sparse if $p \ll 1$. As the matrix entries take values $0$ or $1$, the mean of the entries is not $0$.  Ignoring the nonzero mean for the moment, the Erd\H{o}s-R\'enyi matrix can be viewed as a singular Wigner matrix, as the probability distribution of the matrix elements is highly concentrated around $0$.  The singular nature of this ensemble can be expressed by the fact that the $k$-th moment of a matrix entry is bounded by
\begin{align}
N^{-1} (p N)^{- (k-2)/2}.
\end{align}
When $p \ll 1$, this decay in $k$ is much slower than in the case of Wigner matrices.

It was conjectured in \cite{EvEc} 
that for sparse random matrices there exists a critical value $p_c>1$, such that for $pN> p_c$, the bulk eigenvalues are strongly correlated and are characterized by GOE/GUE random matrix statistics; for $pN<p_c$, the eigenvalues remain uncorrelated and follow Poisson statistics. This conjecture is supported by a wealth of numerical simulations \cite{EvEc,Ku} and a nonrigorous supersymmetric approach \cite{FyMi,MiFy}. 
%
The best rigorous result in this direction was obtained in the works \cite{EKYYa, EKYYb} and asserts that if
\begin{align}
pN \geq N^{2/3+ \eps} \label{eqn:p23},
\end{align}
then the averaged $n$-point correlation functions of the Erd\H{o}s-R\'enyi ensemble coincide with the GOE.

In the present work we prove that in the regime
\begin{align}
pN \geq N^{\eps} \label{eqn:p1},
\end{align}
the local statistics of the Erd\H{o}s-R\'enyi ensemble exhibit bulk universality.  In addition to proving that the averaged $n$-point correlation functions coincide with the GOE, we also prove the universality of the eigenvalue gaps with a fixed label.  To further place the present work in context we recall the three-step strategy developed in  \cite{erdos2010bulk, EKYYb, erdos2011universality, erdos2012rigidity, general, gap} for proving universality for Wigner matrices:
\begin{enumerate}[label={(\arabic*)}]
\item \label{item:local} Establish a local semicircle law controlling the number of eigenvalues in windows of size $\log(N)^C/N$.
\item \label{item:dbm} Analyze the local ergodicity of Dyson Brownian motion (DBM) to obtain universality for Wigner ensembles with a small Gaussian component.
\item \label{item:pert} A density argument comparing a general Wigner matrix to one with a small Gaussian component.
\end{enumerate}
For an overview of this three-step strategy and more details we refer the reader to \cite{localspectral}.  The local semicircle law for sparse random matrices in the regime \eqref{eqn:p1} was established in \cite{EKYYa}.  However,  in \cite{EKYYb} Steps \ref{item:dbm} and \ref{item:pert} were only completed for sparse random matrices in the regime \eqref{eqn:p23}.  

The key input from Step \ref{item:local} into Step \ref{item:dbm} is a high-probability a-priori bound on the eigenvalue locations which is a corollary of the strong semicircle law.  In the case of Wigner matrices, this bound is optimal and it allows one to conclude that local equilibrium is reached by DBM in times $t = N^\eps/N$.  Sparse random matrices do not obey as strong a semicircle law and so the time to equilibrium found in the work \cite{EKYYb} was much longer.  Moreover, due to the slow decay of the third moment, the approximation in Step \ref{item:pert} is not as strong in the case of Wigner matrices and so could not be used for the large times required by Step \ref{item:dbm}.  These two factors led to the condition \eqref{eqn:p23} of \cite{EKYYb}. 

In the recent work \cite{ly}, the optimal time of Dyson Brownian motion to local equilibrium was established for a wide class of initial data (see \cite{Kevin3} for related results on DBM with general initial data).  Using this as an input we will prove that DBM reaches local equilibrium in  the optimal time $t = N^\eps / N$ when the initial data is a sparse random matrix. 
For the comparison of correlation functions,  Step \ref{item:pert} was obtained in \cite{EKYYb} via  a Green function comparison theorem. In this paper, we will use a lemma of \cite{BoYa} which asserts continuity of DBM when viewed as a matrix Ornstein-Uhlenbeck process. 
It is interesting to note that this continuity lemma provides 
a very convenient tool for Step \ref{item:pert} in the sparse setting whenever a ``weak local semicircle law" is valid -- and in the case of sparse random matrices this is provided by a result of \cite{EKYYa}. 


 The universality of a single gap was established in \cite{gap} for Wigner matrices, i.e., for $p \sim O(1)$. The work \cite{ly} also yields gap universality for DBM after the optimal time $t=N^{\eps-1}$ and so our task is similar to the proof of the correlation function universality in that we must establish Step \ref{item:pert} and compare the gap distributions.
However, the  completion of  Step \ref{item:pert} presents a major difficulty. 
Previously, for gap universality this  step was  based on results of \cite{tao2010random, tao2011random} or \cite{knowles2013eigenvector}
which states that the gap distribution of two Wigner ensembles coincide provided that  the first four moments of these two ensembles match. 
However, these results were based on two inputs: firstly, certain  level repulsion  estimates; secondly, an optimal eigenvalue rigidity estimate.


Optimal eigenvalue rigidity estimates for Wigner ensembles were proven in \cite{erdos2010universality, erdos2012rigidity}.  This estimate states that for any eigenvalue $\lambda_i$ in the bulk we have that $|\lambda_i-\gamma_i|\leq N^{-1+\delta}$ with overwhelming probability, where $\gamma_i$ is the deterministic classical location of the $i$-th eigenvalue. The best known rigidity result for sparse random matrices is from \cite{EKYYa}, where it was shown that the bulk eigenvalues satisfy $|\lambda_i-\gamma_i|\leq p^{-1}N^{-1+\delta}$ with overwhelming probability. 

Moreover, we do not expect that optimal rigidity holds for sparse random matrices. In fact in \cite{ShTi}, it was shown that for sparse random matrices the linear statistics 
\begin{align*}
N^{-1}\sum_{i=1}^{N}\phi(\lambda_i) \to \mathcal{N}
\end{align*}
converges to a normal random variable with variance $O((N^2p)^{-1})$, for $\phi$ satisfying some regularity conditions. This implies that the fluctuations of the eigenvalues are at least of order $O((N\sqrt{p})^{-1})$ on average, and so we do not expect optimal rigidity to hold if $p\ll 1$.

As mentioned above, the lack of rigidity for sparse ensembles resulted in the longer time to equilibrium for DBM being found in \cite{EKYYb}, and it again causes difficulty in trying to compare gap statistics. Rigidity results are a crucial input in establishing level repulsion estimates for Wigner matrices which are needed in order to compare the gap statistics of two ensembles. 
It was proven in \cite{ly} that a level repulsion estimate will hold for DBM after a short time.  We show that one can combine the delocalization of eigenvectors together with the Ornstein-Uhlenbeck continuity lemma of  \cite{BoYa} to pass this level repulsion from DBM to the initial sparse random matrix. This level repulsion estimate then gives us a key input for Step \ref{item:pert} and we are able to conclude universality of the gap statistics.

Previous level repulsion estimates were obtained in \cite{wegner, Homog} for Wigner ensembles whose entries have a smooth distribution.  Estimates without a smoothness condition were obtained in \cite{tao2010random, tao2011random} and also in the very recent work \cite{taolevel}.  A weak level repulsion estimate for Wigner matrices also follows from the results of \cite{gap}.

In fact, our strategy outlined above applies to a wider class of random matrices than sparse or Wigner random matrices alone.  We will prove that bulk universality holds for a class of random matrices obeying only a weak estimate on the distribution of its eigenvalues, a weak decay condition on the third moment of the entries and an eigenvector delocalization estimate. 


The remainder of the paper is outlined as follows.  In Section \ref{def} we introduce the random matrix models under consideration, which we will call `stable' random matrices, and state our main results.  In Section \ref{bulkHt} we obtain bulk universality for Gaussian divisible ensembles.  In Section \ref{levelRp} we state and prove our level repulsion results for stable random matrices.  In Section \ref{bulkH} we complete Step \ref{item:pert} outlined above and compare the bulk statistics of a general stable random matrix and a Gaussian divisible ensemble.  In Section \ref{example} we prove that sparse random matrices are stable and conclude universality for sparse random matrices.

\section{Definition of model and main results}\label{def}

In our paper we will only state and prove our results for real symmetric random matrix ensembles.  All of our methods extend with only notational changes to complex Hermitian ensembles.

\subsection{Sparse random matrices}

In this section we introduce the class of sparse random matrices that we study.  We follow the notations and definitions of \cite{EKYYa, EKYYb}.  The motivating example is the Erd\H{o}s-R\'enyi matrix whose entries are independent up to the constraint that the matrix is symmetric, and equal to $1$ with probability $p$ and $0$ with probability $1-p$.  It is notationally convenient to replace the parameter $p$ with $q$ defined through
\begin{align}
q:=  \sqrt{Np}.
\end{align}
We allow $q$ to depend on $N$.  We also rescale the matrix so that the bulk of its spectrum lies in an interval of order $1$.  For the Erd\H{o}s-R\'enyi matrix we define $H$ to be the $N\times N$ symmetric matrix whose entries $h_{ij}$ are independent up $h_{ij} = h_{ji}$ and each element is distributed according to
\begin{align}
h_{ij} = \frac{\gamma}{q} \begin{cases} 1 & \mbox{ with probability } \frac{q^2}{N} \\ 0 & \mbox{ with probability } 1 - \frac{q^2}{N} \end{cases},
\end{align}
where we have defined
\begin{align}
\gamma := \left( 1- \frac{q^2}{N} \right)^{-1/2}.
\end{align}
We further extract the mean of each entry and write
\begin{align}
H = \M + \gamma q \vert \ev \rangle \langle \ev \vert
\end{align}
where $\ev$ is the unit vector
\begin{align}
\ev := \frac{1}{ \sqrt{N}} ( 1, ..., 1 )^T.
\end{align}
Note that the matrix elements of $\M$ are centered. It is easy to check that the matrix elements of $\M$ satisfy the moment bounds 
\begin{align}
\EE [\m_{ij}^2] = \frac{1}{N} , \qquad \EE [ | \m_{ij} |^k ] \leq \frac{1}{ Nq^{k-2} },\quad k\geq 2.
\end{align}
We are prompted to make the following definition.  We introduce two parameters $q$ and $f$ which may be $N$-dependent.  

\begin{definition}[Sparse random matrices] \label{def:snrm} $H$ is a sparse random matrix with sparsity parameter $q$ and mean $f$ if it is of the form
\begin{align}
H = \M + f \vert \ev \rangle \langle \ev \vert
\end{align}
where $f$ is a deterministic number satisfying
\begin{align}
0 \leq f \leq N^{1/2}
\end{align}
and $\M$ is a matrix with real and independent  entries up to the symmetry constraint $\m_{ij} = \m_{ji}$ which satisfy
\begin{align} \label{momentbound}
\EE [ \m_{ij} ] = 0, \quad \EE [ | \m_{ij} |^2 ] = \frac{1}{N} , \quad \EE [ |\m_{ij} |^k ] \leq \frac{ C^k } { N q^{k-2} }
\end{align}
for $1 \leq i \leq j \leq N$ and $2 \leq k \leq \log(N)^{ 10 \log \log N }$ where $C$ is a positive constant. We assume that $q$ satisfies
\begin{align}
N^{\alpha} \leq q \leq N^{1/2}
\end{align}
for some $\alpha >0$.

\end{definition}

\subsection{Universality of sparse random matrices}

Our main result is the bulk universality of sparse random matrices as defined above.

\begin{theorem}\label{bsparse}
 Let $H$ be a sparse random matrix as defined in Definition \ref{def:snrm}, with sparsity parameter $q$ satisfying
 \begin{align}
 N^{\alpha} \leq q \leq N^{1/2},
 \end{align}
for some number $\alpha>0$. Then $H$ exhibits bulk universality in the following two forms. Firstly, $H$ has the single gap universality in the bulk. For any $\kappa>0$ and index $i\in [[\kappa N, (1-\kappa)N]]$
\begin{align}
\begin{split}
\lim_{N\rightarrow \infty}&\EE^{(H)}[ O(N \rho_{sc}(\gamma_i)(\lambda_i-\lambda_{i+1}), \cdots, N\rho_{sc}(\gamma_i)(\lambda_i -\lambda_{i+n}))]\\
-&\EE^{(GOE)}[ O(N \rho_{sc}(\gamma_i)(\lambda_i-\lambda_{i+1}), \cdots, N\rho_{sc}(\gamma_i)(\lambda_i -\lambda_{i+n}))]=0.
\end{split}
\end{align}
Secondly, the averaged $n$-point correlation functions of $H$ are universal in the bulk. We denote the $n$-point correlation function functions of $H$ and $GOE$ by $\rho_H^{(n)}$ and $\rho_{GOE}^{(n)}$ respectively, then for any $\delta >0$ and $E\in (-2, 2)$,  and $b\geq N^{-1+\delta}$ 
\begin{align}
\begin{split}
\lim_{N\rightarrow \infty}\int_{E-b}^{E+b} \int_{\RR^n} O(\alpha_1,\cdots, \alpha_n)\left\{ \frac{1}{\rho_{sc}(E)^n}\rho^{(n)}_{H}\left( E'+\frac{\alpha_1}{N\rho_{sc}(E)},\cdots E'+\frac{\alpha_n}{N\rho_{sc}(E)}\right)\right.\\
-\left.\frac{1}{\rho_{sc}(E)^n}\rho^{(n)}_{GOE}\left( E'+\frac{\alpha_1}{N\rho_{sc}(E)},\cdots E'+\frac{\alpha_n}{N\rho_{sc}(E)}\right)\right\} \d \alpha_1 ... \d \alpha_n \frac{\d E'}{2b}=0.
\end{split}
\end{align}
where the test function $O\in C^{\infty}_{c}(\RR^n)$.
\end{theorem}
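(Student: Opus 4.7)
The plan is to implement the three-step strategy outlined in the introduction, with the class of stable random matrices (introduced in Section \ref{def}) serving as the unifying framework. First I would verify that the sparse matrices of Definition \ref{def:snrm} satisfy the stability conditions: the local semicircle law of \cite{EKYYa} supplies the required a-priori eigenvalue bound $|\lambda_i - \gamma_i| \leq q^{-1} N^{-1+\delta}$ as well as the isotropic eigenvector delocalization estimate, while the moment condition \eqref{momentbound} supplies the needed decay on the third moment of the entries. This reduces Theorem \ref{bsparse} to establishing bulk universality for the general class of stable matrices, which is the purpose of Sections \ref{bulkHt}--\ref{bulkH}.

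For the Gaussian divisible step, I would run the matrix Ornstein--Uhlenbeck / Dyson Brownian motion $H_t = e^{-t/2} H + \sqrt{1 - e^{-t}}\, W$ with $W$ an independent GOE matrix and $H$ the stable matrix as initial data. By the recent results of \cite{ly}, at the optimal time $t = N^{-1+\eps}$ both the averaged $n$-point correlation functions and the distribution of individual eigenvalue gaps of $H_t$ coincide with those of the GOE. Crucially, the input required of the initial data is only the weak rigidity available for stable ensembles, rather than the optimal rigidity needed in the earlier Wigner-type analyses.

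The heart of the argument is Step (3), the comparison from $H_t$ back to $H$, which I would carry out separately for correlation functions and for single gaps. For the averaged correlation functions, I would apply the Ornstein--Uhlenbeck continuity lemma of \cite{BoYa}: using only the weak local semicircle law of \cite{EKYYa} together with the third-moment decay from \eqref{momentbound}, this lemma shows that the averaged correlation functions of $H$ and $H_t$ differ by $o(1)$ on the relevant scale for $t = N^{-1+\eps}$, which combined with the previous paragraph yields the second conclusion of the theorem.

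The main obstacle is the single-gap comparison, where the usual four-moment matching Green function theorems are unavailable because sparse matrices do not satisfy optimal rigidity. My strategy here proceeds in two substeps. First, I would establish a level repulsion estimate for $H_t$ after the short DBM time $t = N^{-1+\eps}$, using the analysis of \cite{ly}. Second, I would propagate this level repulsion backward to the initial sparse matrix $H$ by combining eigenvector delocalization, which holds for stable ensembles, with the Ornstein--Uhlenbeck continuity lemma of \cite{BoYa}; the delocalization is what allows one to control the sensitivity of individual eigenvalues under the small time-$t$ perturbation without invoking optimal rigidity. Once level repulsion for $H$ itself is in hand, a gap-comparison argument in the spirit of \cite{tao2010random, tao2011random, knowles2013eigenvector} closes the comparison and yields the first conclusion. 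I expect this transfer of level repulsion from $H_t$ to $H$ in the absence of optimal rigidity to be the technically most delicate portion of the proof.
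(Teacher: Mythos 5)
Your overall architecture coincides with the paper's: verify that sparse matrices are stable (Section \ref{example}), get universality for $H_t$ at $t=N^{-1+\eps}$ from \cite{ly} (Section \ref{bulkHt}), compare correlation functions via the Ornstein--Uhlenbeck continuity lemma of \cite{BoYa}, and obtain level repulsion for $H_t$ from \cite{ly} and transfer it back to $H$ using delocalization plus the same continuity lemma (Theorem \ref{tlevp}, proved via the cutoff observable $\chi_M(Q_i)$ with $Q_i(A)=N^{-2}\sum_{j\neq i}|\lambda_i-\lambda_j|^{-2}$). Two points, however, deserve correction. The substantive one is your final step: you propose to close the single-gap comparison between $H$ and $H_t$ by a four-moment matching argument ``in the spirit of'' \cite{tao2010random,tao2011random,knowles2013eigenvector}. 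As the paper emphasizes, those results require not only level repulsion but also \emph{optimal} eigenvalue rigidity $|\lambda_i-\gamma_i|\leq N^{-1+\delta}$, which is not available (and not expected) for sparse matrices with $q\ll N^{1/2}$; so this step cannot simply be cited and would fail as stated. The paper instead closes the loop with the same continuity Lemma \ref{compare} applied directly to the observable $F(A)=O(N\lambda_{i}(A))\,\rho_M(Q_i(A))$: the level repulsion you have just established for \emph{both} $H$ and $H_t$ lets you insert and remove the cutoff $\rho_M$ at the cost of $O(N^{-\tau/2})$, and the $\delta$-generality (eigenvector delocalization plus non-accumulation of eigenvalues, preserved under the rank-two deformations $\theta^{ab}H_s$) gives the third-derivative bounds on $F$ needed for the lemma, so no rigidity input is required (Lemma \ref{lem:comparegap}). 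You already possess all the ingredients for this; the missing idea is that the continuity lemma itself, not a four-moment theorem, performs the gap comparison.

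A second, more minor compression: for the correlation functions the continuity lemma does not directly compare correlation functions of $H$ and $H_t$; it compares expectations of smooth functions of traces of resolvents at scales $\eta\geq N^{-1-\delta}$ (Lemma \ref{gcp}), and one then needs the standard transfer from Green-function comparison to averaged correlation functions, quoted in the paper from \cite{ESYY} (Theorem \ref{thm:comparecor}). Also, in verifying stability you should note that condition \ref{stable} of Definition \ref{stabledef} concerns the deformed matrices $\theta^{ab}H_s$ along the whole flow, which the paper handles by a rank-two resolvent perturbation of $H_s$, and that the weak local law is needed for $H_t^{(1)}$ (a generalized sparse matrix after rescaling), not merely for $H$ itself; the rigidity bound $|\lambda_i-\gamma_i|\leq q^{-1}N^{-1+\delta}$ you invoke is not actually the input used.
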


\subsection{Stable random matrices}\label{def23}
While our main goal is to study the sparse random matrices defined above, we note that our analysis applies to a somewhat more general class of random matrices.  We consider an $N\times N$ real symmetric random matrix $H=(h_{ij})_{1\leq i,j\leq N}$, which satisfy $\EE[h_{ij}]=\f$ and $\EE[(h_{ij}-\f)^2]=s_{ij}$.  We assume that there are constants $c_1$ and $c_2$ such that
\begin{align}c_1N^{-1}\leq s_{ij}\leq c_2N^{-1},
\end{align}
and the mean $0\leq \f \leq N^C$ may depend on $N$.

We define the following matrix stochastic differential equation \cite{BoYa} which is an Ornstein-Uhlenbeck version of the Dyson Brownian motion. The dynamics of the matrix entries are given by the stochastic differential equations
\begin{align}
\label{DBM} \d\left(h_{ij}(t)-\f\right) =\frac{\d B_{ij}(t)}{\sqrt{N}}-\frac{1}{2Ns_{ij}}\left(h_{ij}(t)-\f \right)\d t.
\end{align}
where $B$ is symmetric with $(B_{ij}(t))_{1\leq i\leq j\leq N}$ a family of independent Brownian motions. We denote $H_t= (h_{ij}(t))_{1\leq i,j\leq N}$, and so $H_0=H$ is our original matrix. More explicitly, for the entries of $H_t$, we have
\begin{align}
\label{fDBM} h_{ij}(t)=\f+\e^{-\frac{t}{2Ns_{ij}}}\left(h_{ij}(0)-\f\right)+\frac{1}{\sqrt{N}}\int_{0}^{t}\e^{\frac{s-t}{2Ns_{ij}}}\d{B_{ij}(s)}.
\end{align}
Clearly, for any $t\geq 0$, we have $\EE[h_{ij}(t)]=\f$, and $\EE[\left(h_{ij}(t)-\f\right)^2]=s_{ij}$. More importantly, the law of $h_{ij}(t)$ is Gaussian divisible, i.e. it contains a copy of Gaussian random variable with variance $O(tN^{-1})$. Therefore $H_t$ can be written as 
\begin{align}
\label{modOU}H_t\stackrel{d}{=}H_t^{(1)}+\sqrt{\frac{r(1-e^{-t/r})}{2}}G,
\end{align}
where $r=\min_{i\leq j}\{Ns_{ij}\}$, $G$ denotes a standard gaussian orthogonal ensemble, which is independent of $H_t^{(1)}$. The entries of the matrix $H_t^{(1)}$ is given by
\begin{align*}
\left(H_t^{(1)}\right)_{ij}\stackrel{d}{=}
\f+\e^{-\frac{t}{2Ns_{ij}}}\left(h_{ij}(0)-\f\right)+\sqrt{Ns_{ij}(1-e^{-\frac{t}{Ns_{ij}}})-\frac{1+\delta_{ij}}{2}r(1-e^{-\frac{t}{r}})}\frac{\tilde{B}_{ij}(t)}{\sqrt{N}},
\end{align*}
where $\tilde{B}$ is symmetric with $(\tilde{B}_{ij}(t))_{1\leq i\leq j\leq N}$ a family of independent Brownian motions.

We define the deformed matrix $\theta^{ab}H_t$ by
\begin{align}
(\theta^{ab}H_t)_{kl}:=\f+\theta^{ab}_{kl}\left(h_{kl}(t)-\f\right),
\end{align}
where $\theta_{kl}^{ab}=1$ unless $\{k,l\}=\{a,b\}$ in which case $\theta_{ab}^{ab} = \theta_{ba}^{ab}$ will be a number satisfying $0\leq \theta_{ab}^{ab}=\theta_{ba}^{ab}\leq 1$. 

\begin{definition}\label{general}
Let $A$ be an $N\times N$ deterministic real symmetric matrix. We denote the eigenvalues of $A$ as $\{\lambda_1,\lambda_2,\cdots, \lambda_N\} $ and corresponding eigenvectors $\{u_1,u_2,\cdots, u_N\}$. For any (small) number $\delta>0$, we call the matrix $M$ $\delta$-general if: 
\begin{enumerate}[label={(\arabic*)}]
\item The eigenvectors of $A$ are completely delocalized: $\sup_{i, j} |u_i(j)|^2\leq CN^{-1+\delta}$.
\item The eigenvalues of $A$ do not accumulate:  there is an universal constant $C$, such that for any interval $I$ with length $|I|\geq N^{-1+\delta}$, we have $\#\{i: \lambda_i\in I\}\leq C|I|N$. 
\end{enumerate} 
\end{definition}

\begin{definition}\label{stabledef}
We call the random matrix $H$ {\it{stable}} if: 
\begin{enumerate}[label={(\arabic*)}]
\item The entries of $H$ are independent up to symmetry.\label{ind}
\item For any time $t=N^{-1+\epsilon}$, where $\epsilon>0$ can be arbitrarily small, the random matrix $H_t^{(1)}$ defined in \eqref{modOU} satisfies the weak local semicircle law, i.e. for any (large) number $D>0$, and (small) number $\delta>0$, the following holds with probability larger than $1-N^{-D}$,
\begin{align}
\label{loclawHt1}\left|\frac{1}{N}\Tr (H_t^{(1)}-E-i\eta)^{-1}-m_{sc}(E+i\eta)\right|\leq N^{-c(\delta)}, 
\end{align}
uniformly for $-5\leq E\leq 5, N^{-1+\delta}\leq \eta\leq 10$,
where $c(\delta)>0$ is some constant depending on $\delta$, and $\spc (H_t^{(1)} ) \subseteq (-3, N^C)$ for some fixed $C$. \label{loclaw}

\item There exists some universal constant $\alpha$, such that $\EE[|h_{ij}-\f|^3]\leq N^{-1-\alpha}$.\label{moment}
\item For any time $0\leq s\leq t$, any (large) number $D>0$, and (small) number $\delta>0$, $\theta^{ab}H_{s}$ is $\delta$-general with probability larger than $1-N^{-D}$, with the constants in Definition \ref{general} uniformly in $s$.  \label{stable}
\end{enumerate}
\end{definition}

\begin{remark}
The second condition \ref{loclaw} implies that for any $\kappa >0$, the eigenvalues $\lambda_i ( H_t^{(1)} ) \in (-2 + \kappa^{2/3}, 2 - \kappa^{2/3})$ for $i \in [[2\kappa N, (1 - 2\kappa ) N ]]$ with overwhelming probability.
\end{remark}
\begin{remark}
In order to simplify our proof we have assumed that the matrix elements are independent. Independence is mainly used in the comparison Lemma \ref{compare}, which will still hold if the matrix entries of $H$ are weakly correlated. 
\end{remark}

\begin{remark}
 The motivating example of our paper is the sparse random matrix, and we have therefore assumed that the Stieltjes transform of the empirical eigenvalue distribution of $H_t^{(1)}$ is close to $m_{sc}$, i.e., the semicircle law. The semicircle law, however, does not play an active role 
and   our methods can be applied to the case in which the semicircle law is replaced by other  densities. We will not  pursue this direction and  refer the  interested reader to \cite{ajanki1, ajanki2} and \cite {adlam}  for examples in which the limiting eigenvalue density differs from the semicircle law. 
\end{remark}


In this paper we will prove that the local statistics of stable random matrices are universal.

\begin{theorem}{\label{TbulkH}}
Let $H$ be a stable random matrix as defined in Definition \ref{stabledef}. The local statistics of $H$ in the bulk are universal. Firstly, $H$ has gap universality with a fixed label in the bulk. For any $\kappa>0$ and index $i\in [[\kappa N, (1-\kappa)N]]$, we have
\begin{align}\label{Hgapu}
\begin{split}
\lim_{N\rightarrow \infty}&\EE^{(H)}[ O(N \rho_{sc}(\gamma_i)(\lambda_i-\lambda_{i+1}), \cdots, N\rho_{sc}(\gamma_i)(\lambda_i -\lambda_{i+n}))]\\
-&\EE^{(GOE)}[ O(N \rho_{sc}(\gamma_i)(\lambda_i-\lambda_{i+1}), \cdots, N\rho_{sc}(\gamma_i)(\lambda_i -\lambda_{i+n}))]=0.
\end{split}
\end{align}
The averaged $n$-point correlation functions of $H$ are universal in the bulk. We denote the $n$-point correlation function functions of $H$ and $GOE$ by $\rho_H^{(n)}$ and $\rho_{GOE}^{(n)}$ respectively, then for any $\delta >0$ and $E\in (-2, 2)$,  and $b\geq N^{-1+\delta}$ 
\begin{align}\label{Hcoru}
\begin{split}
\lim_{N\rightarrow \infty}\int_{E-b}^{E+b} \int_{\RR^n} O(\alpha_1,\cdots, \alpha_n)\left\{ \frac{1}{\rho_{sc}(E)^n}\rho^{(n)}_{H}\left( E'+\frac{\alpha_1}{N\rho_{sc}(E)},\cdots E'+\frac{\alpha_n}{N\rho_{sc}(E)}\right)\right.\\
-\left.\frac{1}{\rho_{sc}(E)^n}\rho^{(n)}_{GOE}\left( E'+\frac{\alpha_1}{N\rho_{sc}(E)},\cdots E'+\frac{\alpha_n}{N\rho_{sc}(E)}\right)\right\}\d \alpha_1 ... \d \alpha_n \frac{dE'}{2b}=0.
\end{split}
\end{align}
Above, the observable $O\in C^{\infty}_{c}(\RR^n)$.
\end{theorem}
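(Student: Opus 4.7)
The plan is to follow the three-step strategy outlined in the introduction, adapted to the setting of stable random matrices where only weak rigidity is available. Step (1), a local law, is built directly into the stability hypothesis \ref{loclaw}. Step (2) will use the matrix stochastic flow \eqref{DBM} together with the decomposition \eqref{modOU} to reduce the problem to proving universality for the Gaussian divisible ensemble $H_t$ at time $t = N^{-1+\epsilon}$. Step (3) will then remove the Gaussian component via the Ornstein--Uhlenbeck continuity lemma of \cite{BoYa}, supported by a level repulsion estimate for $H$ that we also establish by a continuity argument.

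For Step (2), I would consider $H_t$ for $t = N^{-1+\epsilon}$ with $\epsilon>0$ small. By \eqref{modOU}, $H_t$ equals in distribution $H_t^{(1)} + c(t) G$, where $c(t) \asymp \sqrt{t}$ and $G$ is an independent GOE matrix. The stability condition \ref{loclaw} gives a weak local semicircle law for $H_t^{(1)}$, and hence an a priori bound on the locations of its eigenvalues in the bulk. This bound, combined with the main results of \cite{ly} on the local ergodicity of Dyson Brownian motion with general initial data, produces both the single-gap universality \eqref{Hgapu} and the averaged correlation function universality \eqref{Hcoru} for $H_t$ in place of $H$ (these statements will be the content of the promised Section~\ref{bulkHt}).

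The remaining and most delicate step is the comparison between $H_0 = H$ and $H_t$. The standard Green function comparison approach would require the optimal rigidity $|\lambda_i - \gamma_i| \leq N^{-1+\delta}$, which is not available for stable ensembles. Instead I would use the Ornstein--Uhlenbeck continuity lemma of \cite{BoYa}: differentiating an appropriate statistic of the eigenvalues along the OU flow and interpolating through the deformed matrices $\theta^{ab} H_s$, one obtains an expression whose leading contribution is governed by the third moment of the entries, which is controlled by hypothesis \ref{moment}. The accumulated error over $s \in [0, t]$ with $t = N^{-1+\epsilon}$ should then be a small power of $N$ provided $\epsilon < \alpha$, yielding the desired match of gap and averaged correlation function statistics between $H$ and $H_t$.

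The hardest piece is establishing a level repulsion estimate for $H$ itself, which is needed as input for the continuity lemma above when applied to test functions that approximate indicator functions of gap events. The work \cite{ly} provides such an estimate for $H_s$ after a short time, but transferring it back to $s = 0$ is where the hypotheses \ref{ind} and \ref{stable} enter nontrivially. The idea is to apply the OU continuity lemma a second time, now with a test function designed to detect small gaps, and to use the eigenvector delocalization and the non-accumulation of eigenvalues guaranteed by the $\delta$-generality of $\theta^{ab} H_s$ in \ref{stable} to bound the resulting perturbative terms uniformly in $s \in [0,t]$. The level repulsion estimate for $H$ thus obtained (to be proved in Section~\ref{levelRp}) is then fed back into the comparison of Section~\ref{bulkH}, yielding \eqref{Hgapu} and \eqref{Hcoru}. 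I expect this transfer of level repulsion from the DBM-evolved matrix back to the original ensemble to be the technical heart of the argument, since it must compensate for the absence of optimal rigidity in the stable (in particular, sparse) setting.
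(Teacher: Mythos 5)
Your outline reproduces the paper's strategy for the gap statistics and for the transfer of level repulsion (comparison of $H$ and $H_t$ via the Ornstein--Uhlenbeck continuity lemma, with derivatives controlled through the $\delta$-generality of $\theta^{ab}H_s$ and the third-moment bound), but there is a genuine gap in how you treat the averaged correlation functions \eqref{Hcoru}. You assert that the same eigenvalue-statistic comparison along the OU flow ``yields the desired match of gap and averaged correlation function statistics between $H$ and $H_t$.'' It does not: the comparison you describe is a \emph{fixed-label} statement (the analogue of \eqref{compg}), and converting fixed-label statistics into fixed-energy correlation functions --- even after averaging over a window $b\geq N^{-1+\delta}$ --- requires knowing which labels $i$ have $\lambda_i$ in that window, i.e.\ rigidity on scale $N^{-1+\delta}$. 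Precisely this optimal rigidity fails for stable (in particular sparse) ensembles, where the eigenvalue fluctuations are of order $(N\sqrt{p})^{-1}\gg N^{-1+\delta}$; the paper states explicitly that universality of the correlation functions cannot be deduced directly from \eqref{compg}. Instead, the paper proves a separate Green function comparison lemma (Lemma \ref{gcp}) at scales $N^{-1-\delta}\leq \eta\leq N^{-1}$, applying the continuity lemma to observables of $N^{-1}\Tr G(z)$ and bounding $|G_{jk}(E+\i\eta)|\leq CN^{3\delta}$ below the microscopic scale by combining eigenvector delocalization with the non-accumulation of eigenvalues from $\delta$-generality; the correlation-function statement then follows from \cite[Theorem 2.1]{ESYY} (Theorem \ref{thm:comparecor}). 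Without this step, or an equivalent substitute, your argument establishes \eqref{Hgapu} but not \eqref{Hcoru}.

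Two smaller omissions. First, when you invoke \cite{ly} after conditioning on $H_t^{(1)}$, the gap observables come scaled by the density of the free convolution of the empirical law of $H_t^{(1)}$ at its own classical locations, a random quantity; one must still show it can be replaced by the deterministic $\rho_{sc}(\gamma_i)$, which the paper does via the weak local law and stability of the free convolution (Lemmas \ref{lem:mbds} and \ref{lem:determbds}). Second, level repulsion is needed in the gap comparison not only for test functions approximating indicators of small-gap events, but already for smooth bounded observables of the gaps, because the derivatives $\del_{ab}^{(k)}\lambda_i$ entering the continuity lemma involve inverse gaps; this is why the paper inserts the smooth cutoff $\rho_M(Q_i)$ with $Q_i=N^{-2}\sum_{j\neq i}|\lambda_i-\lambda_j|^{-2}$, and your proposed transfer of level repulsion back to $s=0$ (continuity applied to a quantity detecting small gaps, with derivatives bounded via $\delta$-generality) is essentially the paper's Section \ref{levelRp}.
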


\section{Bulk universality of $H_t$}{\label{bulkHt}}

The goal of this section is to establish bulk universality for the matrix valued stochastic process $H_t$ defined as in \eqref{modOU} after a short time $t = N^{-1+\eps}$. 

\begin{theorem} \label{thm:htbulk}
Let $H$ be a stable random matrix, and let $H_t$ be defined as in \eqref{modOU}.  For any small $\eps,\kappa >0$ there is a constant $c >0$, which depends on $\eps$, such that the following holds for $t = N^{-1 + \eps}$. and any index $i \in [[\kappa N, ( 1 - \kappa ) N ]]$
\begin{align} \label{eqn:htgap}
\bigg| &\EE^{(H_t)} [ O ( N ( \lambda_i - \lambda_{i+1} ) , \cdots , N ( \lambda_i - \lambda_{i + n } ) ) ]\notag \\
 -   &\EE^{(GOE)} [ O ( N ( \lambda_i - \lambda_{i+1} ) , \cdots , N ( \lambda_i - \lambda_{i + n } ) ) ] \bigg| \leq N^{-c}
\end{align}
Moreover, for any $\delta,\kappa>0$, $E\in (-2+\kappa,2-\kappa)$ and $b\geq N^{-1+\delta}$, we have
\begin{align} \label{eqn:htbulk}
\big| &\int_{E - b}^{E+b} \int_{\RR^n} O ( \alpha_1, ..., \alpha_n ) \bigg\{ \frac{1}{ \rhosc (E)^n } \rho_{H_t}^{(n)} \left( E' + \frac{ \alpha_1}{ N \rhosc (E) } , ..., E' + \frac{ \alpha_n}{ N \rhosc (E) } \right) \notag \\
- & \frac{1}{\rhosc (E)^n } \rho_{GOE}^{(n)} \left( E' + \frac{ \alpha_1}{ N \rhosc (E) } , ..., E' + \frac{ \alpha_n } { N \rhosc (E) } \right) \bigg\} \d \alpha_1 ... \d \alpha_n \frac{ \d E'}{b} \big| \leq N^{ - c}
\end{align}
where the test function $O\in C_c^{\infty}(\RR^n)$.
\end{theorem}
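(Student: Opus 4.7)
The strategy is to represent $H_t$ as the endpoint, at time $s_t \sim N^{-1+\epsilon}$, of a standard GOE Dyson Brownian motion initialized at the spectrum of $H_t^{(1)}$, and then invoke the DBM universality theorem of \cite{ly} for general initial data. By \eqref{modOU}, in distribution $H_t = H_t^{(1)} + \sqrt{s_t}\, G$, where $G$ is an independent standard GOE and $s_t := \tfrac{1}{2} r (1 - e^{-t/r})$ with $r = \min_{i \leq j}\{N s_{ij}\}$. Since $c_1/N \leq s_{ij} \leq c_2/N$ forces $r \asymp 1$, for $t = N^{-1+\epsilon}$ we have $s_t \asymp t \gg N^{-1}$. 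Consequently, the eigenvalues of $H_t$ have the same joint law as those of the standard GOE Dyson Brownian motion started from the spectrum of $H_t^{(1)}$ and run for time $s_t$.

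The second step is to verify the initial-data hypothesis required by \cite{ly}, which is a weak local semicircle law at all scales $\eta \ge N^{-1+\delta}$ for energies in a bounded interval. This is precisely the content of condition \ref{loclaw} for $H_t^{(1)}$, namely estimate \eqref{loclawHt1}. A standard Helffer--Sj\"ostrand argument converts the Stieltjes-transform bound into an $N^{-c(\delta)}$-accurate estimate on the empirical eigenvalue counting function of $H_t^{(1)}$, which is the weak eigenvalue-location input needed by the DBM analysis. The inclusion $\spc(H_t^{(1)}) \subseteq (-3, N^C)$ rules out pathological eigenvalues below the bulk, and a standard consequence of the local law is that only a bounded number of outliers can lie in $(3, N^C)$; such a sublinear set does not affect the local dynamics at the indices $i \in [[\kappa N, (1-\kappa) N]]$.

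With these ingredients in hand, I would invoke the main theorem of \cite{ly}: for GOE Dyson Brownian motion started from data satisfying the weak local semicircle law, at any time $s \geq N^{-1+\epsilon'}$ both the local gap statistics with fixed label and the averaged $n$-point correlation functions in the bulk agree, up to an error $N^{-c}$, with those of the GOE. Applied to our initial data $H_t^{(1)}$ at time $s = s_t \asymp N^{-1+\epsilon}$, this yields \eqref{eqn:htgap} and \eqref{eqn:htbulk} with a constant $c > 0$ depending only on $\epsilon$ and $\kappa$.

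The main obstacle is technical rather than conceptual: one must verify that condition \ref{loclaw} provides exactly the input format assumed by \cite{ly} (typically stated for ensembles whose spectrum is contained in an order-one window), and one must control the bounded number of potential outlier eigenvalues of $H_t^{(1)}$ above the bulk -- arising, in the sparse example, from the rank-one perturbation $f | \ev \rangle \langle \ev |$. The latter can be handled by an interlacing / min--max argument, showing that detaching a sublinear set of eigenvalues far from the bulk leaves the short-time DBM dynamics of bulk particles essentially unchanged, after which \cite{ly} applies directly and delivers the claimed bulk universality of $H_t$.
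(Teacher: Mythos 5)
Your overall framework --- writing $H_t \stackrel{d}{=} H_t^{(1)} + \thetat G$ via \eqref{modOU}, conditioning on $H_t^{(1)}$, checking the weak local law \eqref{loclawHt1} as the initial-data hypothesis, and invoking the short-time DBM universality of \cite{ly} --- is the same as the paper's. However, there is a genuine gap at the step where you ``invoke the main theorem of \cite{ly}.'' That theorem (restated in the paper as Theorem \ref{thm:abulk}) does not give agreement with the GOE in the normalization appearing in \eqref{eqn:htgap}: for deterministic initial data $A$ it compares the gaps of $A_t$ rescaled by $\rho_t(\gamma_{i,t})$, the density of the free convolution of the empirical measure of $A$ with the semicircle law at scale $\thetat$, evaluated at its \emph{own} classical location $\gamma_{i,t}$. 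Once you condition on $A = H_t^{(1)}$, this scaling factor is a random variable, a functional of $H_t^{(1)}$; since the test function in \eqref{eqn:htgap} is fixed on the scale $1/N$ (equivalently, carries the deterministic semicircle normalization), the claimed estimate would fail if this local density differed from $\rhosc(\gamma_i)$ by a constant factor. The substantive content of the paper's proof is precisely this replacement: Lemma \ref{lem:mbds} uses $|\thetat^2 m_t|\leq \thetat$, the stability bound $|m_{sc}(z+\Delta z)-m_{sc}(z)|\leq 2|\Delta z|^{1/2}$ and the defining relation \eqref{eqn:Acon} to show $|m_t - \msc|\leq CN^{-\omega}$ on the event $H_t^{(1)}\in\D$, and Lemma \ref{lem:determbds} converts this (via $|\rho_t'|\leq C/t$ and the comparison of classical locations $|\gamma_{i,t}-\gamma_i|\leq N^{-\omega/2}$) into $|\rho_t(\gamma_{i,t})-\rhosc(\gamma_i)|\leq CN^{-\omega/2}$, an error which can then be absorbed by the smooth test function. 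Your proposal never addresses this; the ``main obstacle'' you identify (outliers and the order-one spectral window) is comparatively minor and is already built into the definition of $\D$.

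A secondary omission concerns \eqref{eqn:htbulk}: you quote \cite{ly} as directly giving averaged correlation-function universality, but passing from the fixed-label gap statement to correlation functions centered at $E'$ and scaled by $\rhosc(E)$ requires, in addition, optimal rigidity of the eigenvalues of $H_t$ around the free-convolution classical locations (from \cite[Theorem 3.3]{ly}) together with the density and location comparisons of Lemma \ref{lem:determbds}, after which one applies the standard argument of \cite[Theorem 2.1]{erdos2011universality} and integrates over $H_t^{(1)}\in\D$. Without the free-convolution comparison lemmas, your plan does not deliver either estimate in the stated, semicircle-normalized form.
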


For the proof we shall first restate the main result of \cite{ly} in a form convenient for our proof.  For this we will introduce some notation.  For a deterministic matrix $A$ we define
\begin{align}
A_t := A + \thetat G,\quad \thetat := \sqrt{\frac{r(1-e^{-t/r})}{2}}=O(t^{\frac{1}{2}}).
\end{align}
where $G$ is a GOE matrix and $r$ is the constant from \eqref{modOU}. We denote by $m_t$ the  Stieltjes transform of the free convolution of the empirical eigenvalue distribution of $A$ and the semicircle law of $\thetat G$, and so $m_0$ is the Stieltjes transform of empirical eigenvalue distribution of $A$.  More explicitly, $m_t$ is defined as the unique solution to the functional equation
\begin{align}
m_t (z) := m_{0}(z+\thetat^2 m_t(z))=\frac{1}{N} \sum_{i=1}^N \frac{1}{ \lambda_i (A) - z - \thetat^2 m_t (z) }, \quad \mathrm{Im} [ m_t ] \geq 0, \mathrm{Im} [z] \geq 0. \label{eqn:Acon}
\end{align}
The free convolution is well-studied, see, e.g, \cite{biane}.  It is known that $m_t$ is the Stieltjes transform of a measure with a density which we denote by $\rho_t$ which is analytic on the interior of its support, for any $t>0$.  Denote the classical eigenvalue locations of the density $\rho_{sc}$ and $\rho_t$ by $\gamma_i$ and $\gamma_{i,t}$, respectively, 
\begin{align*}
\int_{-\infty}^{\gamma_i} \rho_{sc}(x)dx=\frac{i}{N},\quad
\int_{-\infty}^{\gamma_{i,t}} \rho_{t}(x)dx=\frac{i}{N}.
\end{align*}
The following follows from Theorem 2.5 of \cite{ly}.
\begin{theorem}  \label{thm:abulk} Fix $\eps >0$ and suppose that there are constants $c_1 >0$ and $C_1 > 0$ such that
\begin{align}
c_1 \leq \mathrm{Im } [ m_{0} (E + \i \eta ) ] \leq C_1
\end{align}
for all $E \in (-2 + \kappa, 2 - \kappa )$ and $N^{-1+\eps/3} \leq \eta \leq 10$.  Suppose furthermore that $\spc (A) \subseteq [-C, N^C)$ for some fixed $C$.  Let $i$ be such that $\lambda_i (A) \in (-2 + 2 \kappa, 2 - 2 \kappa)$. Then for $t=N^{-1+\epsilon}$, there exists a small constant $c>0$, which depends on $\epsilon$, such that for indices $i_1, ..., i_n \in \nn$ and $i_k \leq N^{c}$,
\begin{align}
\big| &\EE^{(A_t)} [ O (  \rho_t ( \gamma_{i,t} ) N ( \lambda_i - \lambda_{i+i_1} ) , \cdots ,  \rho_t ( \gamma_{i,t} )N ( \lambda_i - \lambda_{i + i_n } ) ) ]\notag \\
 -   &\EE^{(GOE)} [ O ( \rhosc (\gamma_j )N ( \lambda_j - \lambda_{j+i_1} ) , \cdots ,  \rhosc ( \gamma_j )N ( \lambda_j - \lambda_{j + i_n } ) ) ] \big| \leq N^{-c}
\end{align}
for all large enough $N\geq N(\epsilon)$. Above $j$ is any index satisfying $j \in [[ \kappa_1 N , (1 - \kappa_1 ) N ]]$ where $\kappa_1 >0$.

\end{theorem}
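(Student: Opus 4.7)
The plan is to reveal the decomposition \eqref{modOU}, $H_t\stackrel{d}{=}H_t^{(1)}+\thetat G$ with $G$ a GOE independent of $H_t^{(1)}$, and to apply Theorem \ref{thm:abulk} with $A:=H_t^{(1)}$ conditionally on $H_t^{(1)}$. It then suffices to verify that the hypotheses of Theorem \ref{thm:abulk} hold for $A=H_t^{(1)}$ with overwhelming probability. The stability condition \ref{loclaw} provides both the weak local law \eqref{loclawHt1} and the spectral inclusion $\spc(H_t^{(1)})\subseteq(-3,N^C)$. Since $\mathrm{Im}\,\msc(E+\i\eta)$ is bounded away from $0$ and $\infty$ on any compact bulk subset of $(-2,2)$ uniformly in $\eta\in[N^{-1+\eps/3},10]$, the bound $|m_0-\msc|\leq N^{-c(\delta)}$ gives the required two-sided estimate on $\mathrm{Im}\,m_0$ (after shrinking $\kappa$ by a constant factor if necessary). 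A standard counting argument applied to the local law at scale $\eta=N^{-1+\delta}$ further places $\lambda_i(H_t^{(1)})\in(-2+2\kappa,2-2\kappa)$ with overwhelming probability for $i\in[[\kappa N,(1-\kappa)N]]$, so that the hypothesis $\lambda_i(A)\in(-2+2\kappa,2-2\kappa)$ of Theorem \ref{thm:abulk} is satisfied.

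Invoking Theorem \ref{thm:abulk} on this event and taking expectation over $H_t^{(1)}$ yields, for such $i$,
\begin{align*}
\bigl|\EE^{(H_t)}[O(\rho_t(\gamma_{i,t})N(\lambda_i-\lambda_{i+i_k}))]-\EE^{(GOE)}[O(\rhosc(\gamma_i)N(\lambda_i-\lambda_{i+i_k}))]\bigr|\leq N^{-c}.
\end{align*}
To remove the density prefactors and pass to \eqref{eqn:htgap}, I would argue that $\rho_t(\gamma_{i,t})=\rhosc(\gamma_i)+o(1)$ in the bulk: since $\thetat^2=O(N^{-1+\eps})$ and $|m_0-\msc|\leq N^{-c(\delta)}$, the functional equation \eqref{eqn:Acon} forces $|m_t-\msc|=o(1)$ uniformly on compact bulk intervals, from which the required closeness of both the densities and the classical locations follows. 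Absorbing both density factors into a modified $C_c^{\infty}$ test function $\tilde O$ and using uniform continuity of $O$ then yields \eqref{eqn:htgap}.

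For the averaged correlation function statement \eqref{eqn:htbulk}, I would apply the correlation-function analogue of Theorem \ref{thm:abulk} available in \cite{ly}, which holds under identical hypotheses on $A=H_t^{(1)}$ verified above. Alternatively, \eqref{eqn:htbulk} follows from \eqref{eqn:htgap} by a standard energy-window conversion: writing the rescaled correlation function as an expectation over index $n$-tuples, using rigidity at scale $N^{-1+\delta}$ (from the weak local law of $H_t^{(1)}$ together with the $o(1)$ contribution of $\thetat G$) to restrict to bulk indices, and recognising the dominant contribution as an average of fixed-label gap functionals over $j\in[[\kappa N,(1-\kappa)N]]$ reduces the problem to an average of \eqref{eqn:htgap}. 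The step I expect to be the main obstacle is this conversion together with the density-and-classical-location replacement in the previous paragraph: both require careful bookkeeping of the various $o(1)$ errors against the $N^{-c}$ rate from Theorem \ref{thm:abulk}. This is however comfortable since the exponent $\delta$ governing the strength of the weak local law may be chosen arbitrarily small relative to $\eps$, so the $N^{-c(\delta)}$ errors can be made negligible compared with the universality rate $N^{-c}$.
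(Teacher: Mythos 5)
Your proposal does not prove the statement it was supposed to prove: it assumes it. Theorem \ref{thm:abulk} is a statement about a \emph{deterministic} matrix $A$ satisfying the stated hypotheses (two-sided bound on $\mathrm{Im}\, m_0$ in the bulk down to scale $N^{-1+\eps/3}$, $\spc(A)\subseteq[-C,N^C)$, $\lambda_i(A)$ in the bulk) and the Gaussian-divisible matrix $A_t=A+\thetat G$; its content is the fixed-label gap universality of Dyson Brownian motion with general initial data after the optimal time $t=N^{-1+\eps}$. In the paper this is not proved from scratch at all: it is imported directly from Theorem 2.5 of \cite{ly}, and the only work required is to check that the hypotheses as formulated here match the regularity assumptions of that theorem. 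Your argument instead writes "Invoking Theorem \ref{thm:abulk} on this event and taking expectation over $H_t^{(1)}$ yields\dots", i.e.\ it uses Theorem \ref{thm:abulk} as a black box to derive the displays \eqref{eqn:htgap} and \eqref{eqn:htbulk}. Those are the conclusions of Theorem \ref{thm:htbulk}, a different (downstream) result. As a proof of Theorem \ref{thm:abulk} this is circular; nothing in the proposal addresses why $A_t$ relaxes to GOE gap statistics, which would require either the citation to \cite{ly} with a hypothesis check, or redoing the DBM analysis (coupling with the GOE flow and homogenization of the eigenvalue dynamics) — a substantial piece of work you do not attempt.

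For what it is worth, the argument you did write is essentially the paper's own proof of Theorem \ref{thm:htbulk}: conditioning on $H_t^{(1)}$, verifying membership in the set $\D$ via the stability condition \ref{loclaw}, replacing $\rho_t(\gamma_{i,t}^H)$ by $\rhosc(\gamma_i)$ using the continuity of the free convolution (the paper's Lemmas \ref{lem:mbds} and \ref{lem:determbds}), and then passing to averaged correlation functions. One caveat even there: the paper obtains \eqref{eqn:htbulk} from the conditional gap estimate together with the \emph{optimal rigidity of the eigenvalues of $H_t$} (Theorem 3.3 of \cite{ly}), not from a rigidity estimate at scale $N^{-1+\delta}$ deduced from the weak local law of $H_t^{(1)}$ as you suggest; the weak local law alone gives eigenvalue locations only to accuracy $N^{-c(\delta)}$, which is far too crude for the energy-window conversion at windows of size $b\geq N^{-1+\delta}$. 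But the main issue remains that you proved the wrong theorem.
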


We will apply the above theorem by conditioning on $H_t^{(1)}$ and taking $A = H_t^{(1)}$.  However, we must take care of the scaling by $\rho_t ( \gamma_{i,t})$ in the above theorem statement.  This becomes a random variable depending on $H_t^{(1)}$.  We want to replace it by the deterministic quantity $\rhosc ( \gamma_i )$; we will see that our assumption on the weak local law of $H_t^{(1)}$ combined with continuity of the free convolution allows us to do this.

Define the spectral domain 
\begin{equation}
\F=\{z=E+i\eta: E \in (-5, 5), N^{-1+\eps/3} \leq \eta \leq 10\},
\end{equation}
 and the set of real symmetric $N\times N$ matrices
\begin{align}
\label{defD} \D &:= \{ A : |m_{0 } ( z ) - \msc (z ) | \leq  N^{ - \omega }, z\in \F  \} \cap \{ \spc (A) \subseteq (-3, N^C) \}.
\end{align}
where $\omega$ satisfies 
\begin{equation}
0<\omega\leq \min\{\epsilon/3,c(\epsilon/3), (1-\epsilon)/4\},
\end{equation}
 and $c(\epsilon/3)$ is from the constant in \eqref{loclawHt1} of Definition \ref{stabledef}.

\begin{lemma} \label{lem:mbds}
Let $\kappa,\omega>0$ and $A \in \D$ as above.  Then, for any $\epsilon>0$ and $t = N^{-1+\eps}$,
\begin{align}
| m_{t} (z) - \msc (z) | \leq CN^{ -\omega} ,  
\end{align}
uniformly for any $z \in \{ z = E + \i \eta : E \in (-4, 4) , N^{-1+\epsilon/3} \leq \eta \leq 9 \}$.
\end{lemma}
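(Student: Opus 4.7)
The plan is to exploit the characterizing fixed-point equation \eqref{eqn:Acon}, $m_t(z) = m_{0}(z + \thetat^2 m_t(z))$, and compare $m_t$ to $\msc$ termwise. Setting $z' := z + \thetat^2 m_t(z)$, I would decompose
\[
m_t(z) - \msc(z) = \bigl[m_{0}(z') - \msc(z')\bigr] + \bigl[\msc(z') - \msc(z)\bigr].
\]
Provided $z' \in \F$, the first bracket is at most $N^{-\omega}$ by the hypothesis $A \in \D$, while the second will be controlled by a Taylor estimate using that $\thetat^2 = O(N^{-1+\eps})$ is tiny.

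Before exploiting this decomposition I would secure an a priori bound $|m_t(z)| \leq C$ on the target domain. Since $|\msc|$ is bounded on $\F$, the hypothesis $A \in \D$ already gives $|m_{0}(z)| \leq C$ uniformly there. I would then obtain the same bound for $m_t$ by a continuity/bootstrap in $\eta$: at $\eta \simeq 10$ the trivial estimate $|m_t| \leq 1/\eta$ suffices, and as $\eta$ decreases the identity $|m_t(z)| = |m_{0}(z')|$ combined with the shifts $\mathrm{Im}(z') \geq \eta$ and $|\mathrm{Re}(z') - E| \leq C \thetat^2$ keeps $z'$ inside $\F$ (this is why the target domain sits strictly inside $\F$), and hence keeps $|m_t|$ of order one.

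With $z' \in \F$ secured, a Taylor expansion handles the second bracket. From $\msc(w) = \tfrac{1}{2}(-w + \sqrt{w^2 - 4})$ one reads off $|\msc'(w)| \leq C / \sqrt{\mathrm{Im}(w)} \leq C N^{(1-\eps/3)/2}$ on $\F$, so
\[
\msc(z') - \msc(z) = \thetat^2 m_t(z) F(z), \qquad F(z) := \int_0^1 \msc'\bigl(z + s\thetat^2 m_t(z)\bigr)\,\d s,
\]
with $|\thetat^2 F(z)| \leq C N^{-1/2 + 5\eps/6} = o(1)$. Substituting $m_t = \msc + \Delta$ and solving for $\Delta := m_t - \msc$ yields
\[
\Delta(z)\bigl(1 - \thetat^2 F(z)\bigr) = \thetat^2 \msc(z) F(z) + \bigl[m_{0}(z') - \msc(z')\bigr].
\]
The left-hand factor is bounded away from zero and the right-hand side is $O(N^{-\omega})$: the bracket is $\leq N^{-\omega}$ by hypothesis, and the first term is $O(N^{-1/2 + 5\eps/6})$ which is $\leq C N^{-\omega}$ by the standing bound $\omega \leq (1-\eps)/4$. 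This yields $|\Delta(z)| \leq C N^{-\omega}$ uniformly on the target domain.

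The main obstacle I anticipate is the a priori control of $|m_t|$ ensuring $z' \in \F$, since this cannot be asserted from the fixed-point equation without running a continuity argument, which is why I would descend from the trivial regime $\eta \simeq 10$. A secondary technical point is the edge singularity of $\msc'$ near $\pm 2$, which is absorbed by the separation between the exponents $\eps$ (in $\thetat^2$) and $\eps/3$ (in the lower bound on $\eta$); all other manipulations are purely algebraic.
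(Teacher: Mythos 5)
Your proposal is correct in the regime where the lemma is actually used, but it takes a genuinely different route from the paper at both key steps, even though the basic decomposition $m_t(z)-\msc(z)=[m_{0}(z')-\msc(z')]+[\msc(z')-\msc(z)]$, $z'=z+\thetat^2 m_t(z)$, is the same. For the a priori control keeping $z'\in\F$, the paper does not run a bootstrap in $\eta$: it applies Jensen/Cauchy--Schwarz to the defining relation \eqref{eqn:Acon} to get $|m_t|^2\leq \mathrm{Im}[m_t]/\mathrm{Im}[z+\thetat^2 m_t]\leq \thetat^{-2}$, hence $|\thetat^2 m_t|\leq\thetat=O(t^{1/2})$ in one line; this weaker-looking bound is all that is needed, and it avoids the open--closed continuity argument you sketch (which does work, but you should spell out the self-improving dichotomy, e.g.\ that $|m_t|\leq 3$ forces $z'\in\F$ and hence $|m_t|=|m_{0}(z')|\leq 2$, together with continuity of $m_t$ in $\eta$). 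For comparing $\msc$ at $z$ and $z'$, the paper uses the global H\"older-$1/2$ stability estimate \eqref{msc}, $|\msc(z+\Delta z)-\msc(z)|\leq 2|\Delta z|^{1/2}$, which with $|\Delta z|\leq\thetat$ gives an error $O(N^{(\eps-1)/4})\leq CN^{-\omega}$ for every $\eps\in(0,1)$, thanks to $\omega\leq(1-\eps)/4$. You instead integrate $\msc'$, whose edge singularity only allows $|\msc'|\leq C\,\mathrm{Im}(w)^{-1/2}\leq CN^{(1-\eps/3)/2}$, producing the error $N^{-1/2+5\eps/6}$; this beats $N^{-\omega}$ only for $\eps$ sufficiently small (roughly $\eps\leq 3/7$ when $\omega$ is taken as large as $(1-\eps)/4$), and you also need $\eps<3/5$ for $1-\thetat^2F$ to be invertible. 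Since the lemma is only invoked with $\eps$ arbitrarily small this restriction is harmless in context, but as stated (``any $\eps>0$'') your argument is quantitatively weaker than the paper's; note that your stronger a priori bound $|m_t|\leq C$ combined with \eqref{msc} would in fact give the sharper error $O(N^{(\eps-1)/2})$ with no restriction on $\eps$, so if you keep the bootstrap it is better to use the H\"older estimate than the derivative bound.
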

\begin{proof}
From Jensen's inequality we have:
\begin{align*}
|m_{t}|^{2}\leq \frac{1}{N}\sum \frac{1}{|\lambda_i(A)-z-\thetat^2m_{t}(z)|^2}=\frac{\text{Im} [m_{t}(z)]}{\text{Im} [z+\thetat^2 m_{t}(z)]}\leq \thetat^{-2}.
\end{align*}
Therefore we have $|\thetat^2 m_{t}|\leq \thetat=O(t^{1/2})$. Since $A\in \D$, for any $z=E+i\eta$, such that $E\in (-4,4)$ and $N^{-1+\epsilon/3}\leq \eta\leq 9 $, we have that $z+\thetat^2m_{t}\in \mc{F}$. The defining relation \eqref{defD} leads to
\begin{align}
\label{mbound} m_{t}(z)=m_{0}(z+\thetat^2m_{t}(z))=m_{sc}(z+\thetat^2 m_{t}(z))+O(N^{-\omega})
\end{align}
To control $m_{sc} (z+\thetat^2 m_{t})$ we have the following stability estimate of $m_{sc}$: for any $z, \Delta z$ with non-negative imaginary part, and $|\Delta z|\leq 1$, then
\begin{align}
\label{msc}|m_{sc}(z+\Delta z)-m_{sc}(z)|\leq 2|\Delta z|^{1/2}.
\end{align}
Therefore \eqref{mbound} gives us
\begin{align*}
|m_{t}(z)-m_{sc}(z)|\leq  |m_{t}(z)-m_{sc}(z+\thetat^2 m_{t})|+|m_{sc}(z+\thetat^2 m_{t})-m_{sc}(z)|\leq CN^{-\omega},
\end{align*}
given that $\omega<(1-\epsilon)/4$.
\end{proof}

From this lemma we conclude the following.  
\begin{lemma} \label{lem:determbds}
For any $\epsilon, \kappa>0$, time $t=N^{-1+\epsilon}$, any real symmetric matrix $A\in \D$, and $E\in (-2+\kappa, 2-\kappa)$ we have
\begin{align}
\label{detE}\left| \rho_t ( E) - \rhosc ( E ) \right| \leq C N^{- \omega}.
\end{align}
Moreover for any index $i$ such that $\lambda_i(A)\in (-2+\kappa, 2-\kappa)$, we have 
\begin{align}
\left| \rho_t ( \gamma_{i,t} ) - \rhosc ( \gamma_i ) \right| \leq C N^{- \omega/2 }.
\end{align}
where the constant $\omega$ is from the definition \eqref{defD} of the set $\D$.
\end{lemma}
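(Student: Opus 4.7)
The plan is to descend from Lemma \ref{lem:mbds} at the scale $\eta = N^{-1+\epsilon/3}$ to pointwise bounds on the densities, exploiting smoothness of $\rho_t$ and $\rho_{sc}$ in the bulk. For the first inequality, I would use the Poisson representation
\[ \mathrm{Im}[m(E+\i\eta)] = \pi(\rho * P_\eta)(E), \qquad P_\eta(x) = \frac{1}{\pi}\frac{\eta}{x^2+\eta^2}, \]
which yields $|\mathrm{Im}[m(E+\i\eta)] - \pi\rho(E)| = O(\eta\,\|\rho'\|_{\infty,\mathrm{loc}})$ whenever $\rho$ is $C^1$ near $E$. For $\rho_{sc}$ this smoothness is explicit. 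For $\rho_t$ I would invoke the result of Biane that the free convolution of any probability measure with a semicircle distribution is analytic on the interior of its support, with derivatives uniformly bounded on compact subintervals; Lemma \ref{lem:mbds} guarantees that $E \in (-2+\kappa, 2-\kappa)$ lies in the interior of $\mathrm{supp}(\rho_t)$, so $\rho_t'$ is controlled near $E$. Applying the Poisson estimate at $\eta = N^{-1+\epsilon/3}$ and combining with Lemma \ref{lem:mbds} gives
\[ |\rho_t(E) - \rho_{sc}(E)| \leq CN^{-\omega} + O(N^{-1+\epsilon/3}) \leq CN^{-\omega}, \]
since $\omega \leq \epsilon/3$.

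For the second inequality, I would split
\[ \rho_t(\gamma_{i,t}) - \rho_{sc}(\gamma_i) = \big[\rho_t(\gamma_{i,t}) - \rho_{sc}(\gamma_{i,t})\big] + \big[\rho_{sc}(\gamma_{i,t}) - \rho_{sc}(\gamma_i)\big]. \]
The first bracket is $O(N^{-\omega})$ by the first part, once $\gamma_{i,t}$ is verified to lie in the bulk (which follows from $\lambda_i(A) \in (-2+\kappa, 2-\kappa)$ together with a rigidity-type consequence of Lemma \ref{lem:mbds}). The second bracket is bounded by the Lipschitz constant of $\rho_{sc}$ on the bulk times $|\gamma_{i,t} - \gamma_i|$. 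Since $F_t(\gamma_{i,t}) = F_{sc}(\gamma_i) = i/N$, a first-order expansion gives $|\gamma_{i,t} - \gamma_i| \lesssim |F_t - F_{sc}|_{\infty}/\rho_{sc}(\gamma_i)$, reducing the proof to a bound on the sup norm $|F_t - F_{sc}|_{\infty}$.

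The main obstacle is this final CDF estimate. The pointwise bound from part (1) covers only the bulk, whereas $F_t - F_{sc}$ also involves the mass near the edges $\pm 2$, where the densities vanish like a square root and the estimate from part (1) degenerates. To handle the edge contribution I would combine the Stieltjes-transform bound from Lemma \ref{lem:mbds} with the square-root Hölder stability \eqref{msc} of $m_{sc}$ and the macroscopic normalization $\int\rho_t = \int\rho_{sc} = 1$. The $1/2$ exponent in \eqref{msc} is the natural source of the loss from $N^{-\omega}$ to $N^{-\omega/2}$ in the second statement of the lemma.
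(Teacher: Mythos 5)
Your overall architecture matches the paper's: smooth at scale $\eta=N^{-1+\eps/3}$, compare $\mathrm{Im}\, m_t$ with $\mathrm{Im}\, m_{sc}$ via Lemma \ref{lem:mbds} and the H\"older bound \eqref{msc}, then for the second estimate split off $|\rho_{sc}(\gamma_{i,t})-\rho_{sc}(\gamma_i)|$ and reduce to $|\gamma_{i,t}-\gamma_i|$. However, there is a genuine gap in your regularity input for $\rho_t$. You invoke Biane's theorem to claim that $\rho_t$ has ``derivatives uniformly bounded on compact subintervals'' of the interior of its support, and then estimate the Poisson-smoothing error by $O(\eta)$. Biane's result is qualitative: for each fixed measure and fixed semicircle variance it gives analyticity, but here the convolved measure is the $N$-dependent empirical measure of $A$ and the semicircle component has variance $\thetat^2=O(t)=O(N^{-1+\eps})\to 0$, so no $N$-uniform derivative bound follows. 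The quantitative bound that is actually available (and is what the paper uses, \cite[Lemma 7.1]{ly}) is $|\rho_t'(E)|\leq C/t$ in the bulk, together with $\rho_t\leq C$ on a slightly larger interval to control the Poisson tails. Your claimed error $O(N^{-1+\eps/3})$ is therefore unjustified; with the correct bound the smoothing error is $O(\eta/t)=O(N^{-2\eps/3})$, which still suffices since $\omega\leq \eps/3$, but this is exactly the cancellation you must exhibit — asserting an $O(1)$ bound on $\rho_t'$ is not a cosmetic shortcut, it is the step that would fail as stated.

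For the second inequality, your mechanism (CDF comparison $F_t(\gamma_{i,t})=F_{sc}(\gamma_i)=i/N$, with the square-root H\"older continuity of $m_{sc}$ at the edge responsible for the loss from $\omega$ to $\omega/2$) is the right one, but it remains a sketch at the crucial point: you never actually convert the Stieltjes-transform bound of Lemma \ref{lem:mbds}, valid only for $\eta\geq N^{-1+\eps/3}$, into a bound on the counting function and hence on $|\gamma_{i,t}-\gamma_i|$. This requires a Helffer--Sj{\"o}strand-type argument or an off-the-shelf lemma; the paper imports precisely this from \cite[Lemma 7.17]{ly}. Note also that such an argument needs the support of $\rho_t$ to be bounded below, which the paper gets from $\spc(A)\subseteq(-3,N^C)$ (hence $\mathrm{supp}\,\rho_t\subseteq[-7/2,\infty)$); your appeal to the normalization of the measures alone does not substitute for this. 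Filling in these two points — the $|\rho_t'|\leq C/t$ bound in place of the uniform one, and a genuine counting-function comparison — would make your argument essentially coincide with the paper's proof.
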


\begin{proof}
From \cite[Lemma 7.1]{ly}, the derivative of $\rho_t$ satisfies $|\rho_{t}'(E)|\leq C/t$ for $E\in (-2+\kappa, 2-\kappa)$, where we use the fact $\thetat^2=O(t)$.  Moreover the same lemma also shows that $\rho_t (E) \leq C$ on $(-2 + \kappa/2, 2 - \kappa/2 )$. Given $E\in (-2+\kappa, 2-\kappa)$, we denote the intervals $I_1=[E-N^{-1+2\epsilon/3}, E+N^{-1+2\epsilon/3}]$, $I_2 = \{ x: N^{-1+2\epsilon/3} < |x - E | < \kappa/2 \}$ and $I_3 = \RR \backslash ( I_1 \cup I_2 )$. We take $\eta=N^{-1+\epsilon/3}$, then 
\begin{align}
\notag&|\rho_{t}(E)-\frac{1}{\pi} \text{Im }[m_{t}(E+i\eta)]|\\
\notag\leq& \left|\frac{1}{\pi}\int_{I_1} \frac{(\rho_{t}(E)-\rho_{t}(x))\eta \d x}{(x-E)^2+\eta^2}dx\right|+ \left|\frac{1}{\pi}\int_{I_2 \cup I_3} \frac{\rho_{t}(E)\eta \d x}{(x-E)^2+\eta^2}\right| \\
+ &\left|\frac{1}{\pi}\int_{I_2} \frac{\rho_{t}(x)\eta \d x}{(x-E)^2+\eta^2}\right| +
\left|\frac{1}{\pi}\int_{I_3} \frac{\rho_{t}(x)\eta \d x}{(x-E)^2+\eta^2}\right|\notag \\
\label{bound1}\leq& \sup_{x \in I_1}|\rho_{t}(x)-\rho_{t}(E)|+ C N^{-\eps/3} + CN^{-\eps/3} + C\frac{\eta}{ \kappa^2} \int_{I_3} \rho_t (x) \d x \leq CN^{-\epsilon/3},
\end{align}
where we have used $|\rho_t(x) | \leq C$ on $I_1\cup I_2$.
Moreover, we have
\begin{align}
\notag&\left|\frac{1}{\pi} \text{Im }[m_{t}(E+i\eta)]-\rho_{sc}(E)\right| \\
\notag\leq& 
\frac{1}{\pi} \left|\text{Im }[m_{t}(E+i\eta)]-\text{Im }[m_{sc}(E+i\eta)]\right|+\frac{1}{\pi} |\text{Im }[m_{sc}(E+i\eta)]-\frac{1}{\pi} \text{Im }[m_{sc}(E)]|\\
\label{bound2}\leq& N^{-\omega}+\frac{2}{\pi}\sqrt{\eta}\leq CN^{-\omega},
\end{align}
given $\omega \leq (1-\epsilon)/2$. \eqref{bound1} and \eqref{bound2} together lead to \eqref{detE}.

Moreover, by our hypothesis that $A\in \D$, $\spc (A)$ is bounded below by $3$. Therefore the density $\rho_t$ is also bounded below: $\text{supp }\rho_t \subseteq [-7/2, \infty)$. Therefore if $i$ is such that $\lambda_i (A) \in (-2 + \kappa, 2 - \kappa)$ then 
\begin{align}
| \gamma_{i,t} - \gamma_i | \leq N^{ - \omega/2 }.
\end{align}
This follows from \cite[Lemma 7.17]{ly}, using Lemma \ref{lem:mbds} as input.  Therefore,
\begin{align}
\left| \rho_t ( \gamma_{i,t} ) - \rhosc ( \gamma_i ) \right| \leq \left| \rho_t ( \gamma_{i,t} ) - \rhosc ( \gamma_{i,t} ) \right| +\left| \rhosc ( \gamma_{i,t} ) - \rhosc ( \gamma_i ) \right| \leq C N^{ - \omega/2}.
\end{align}
\end{proof}

\subsection{Proof of Theorem \ref{thm:htbulk}}

By the hypotheses of stability of $H$ we have that $H_t^{(1)} \in \D$ with probability greater than $1 - N^{-D}$ for any large $D$. We denote the density of free convolution of $H_t^{(1)}$ and $\thetat G$ as $\rho_t^H$, and its $i$-th classical eigenvalue location as $\gamma_{i,t}^{H}$. From Theorem \ref{thm:abulk} we have
\begin{align}
\big| &\EE^{(H_t)} [ O (  \rho_t^{H} (\gamma_{i,t}^H ) N ( \lambda_i - \lambda_{i+i_1} ) , \cdots , \rho_t^{H} (\gamma_{i,t}^H ) N ( \lambda_i - \lambda_{i + i_n } ) ) | H_t^{(1)} ]\notag \\
 \label{condgap}-   &\EE^{(GOE)} [ O ( \rhosc ( \gamma_i ) N ( \lambda_i - \lambda_{i+i_1} ) , \cdots , \rhosc ( \gamma_i )N ( \lambda_i - \lambda_{i + i_n } ) ) ] \big| \leq N^{-c}.
\end{align}
But then by Lemma \ref{lem:determbds} we have
that $|\rho_t^{H}(\gamma_{i,t}^{H})-\rho_{sc}(\gamma_i)|\leq N^{-\omega/2}$, therefore
\begin{align}
\big| &\EE^{(H_t)} [ O (  \rho_t^H (\gamma_{i,t}^H ) N ( \lambda_i - \lambda_{i+i_1} ) , \cdots , \rho_t^H (\gamma_{i,t}^H) N ( \lambda_i - \lambda_{i + i_n } ) ) | H_t^{(1)} ]\notag \\
 -   &\EE^{(H_t)} [ O ( \rhosc ( \gamma_i ) N ( \lambda_i - \lambda_{i+i_1} ) , \cdots , \rhosc ( \gamma_i )N ( \lambda_i - \lambda_{i + i_n } ) ) | H_t^{(1)}] \big| \leq CN^{-\omega/2}
\end{align}
for some $C>0$ depending on first derivative and the support of the test function $O$.  We therefore obtain that for $H_t^{(1)} \in \D$ that
\begin{align} \label{eqn:htgapcond}
\big| &\EE^{(H_t)} [ O ( \rhosc ( \gamma_i ) N ( \lambda_i - \lambda_{i+i_1} ) , \cdots , \rhosc ( \gamma_i )N ( \lambda_i - \lambda_{i + i_n } ) ) | H_t^{(1)}]\notag \\
 -   &\EE^{(GOE)} [ O ( \rhosc ( \gamma_i )N ( \lambda_i - \lambda_{i+i_1} ) , \cdots , \rhosc ( \gamma_i ) N ( \lambda_i - \lambda_{i + i_n } ) ) ] \big| \leq N^{-c}
\end{align}
if we choose $c$ small enough, such that $c<\omega/2$. And if we take expectation over $H_t^{(1)}$, \eqref{eqn:htgap} follows.

Now let $\tilde{\rho}_t^{(n)}$ be the $n$-point correlation functions of $H_t$ conditioned on $H_t^{(1)}$.  It is well known that the estimate \eqref{condgap} together with the optimal rigidity of the eigenvalues of $H_t$, \cite[Theorem 3.3]{ly} implies that for $H_t^{(1)} \in \D$ we have
\begin{align}
\bigg| &\int_{E - b}^{E+b} \int_{\RR^n} O ( \alpha_1, ..., \alpha_n ) \bigg\{ \frac{1}{ \rhosc (E)^n } \tilde{\rho}_t^{(n)} \left( E' + \frac{ \alpha_1}{ N \rhosc (E) } , ..., E' + \frac{ \alpha_n}{ N \rhosc (E) } \right) \notag \\
- & \frac{1}{\rhosc (E)^n } \rho_{GOE}^{(n)} \left( E' + \frac{ \alpha_1}{ N \rhosc (E) } , ..., E' + \frac{ \alpha_n } { N \rhosc (E) } \right) \bigg\} \d \alpha_1 ... \d \alpha_n \frac{ \d E'}{b} \bigg| \leq N^{ - c}
\end{align}
for $b = N^{-1+\delta}$ for any $\delta>0$. For this argument, we refer the reader to, e.g., \cite[Theorem 2.1]{erdos2011universality}. We conclude \eqref{eqn:htbulk} by integrating over $H_t^{(1)}$. \qed

\section{Level repulsion for stable random matrices}\label{levelRp}
In this section we prove the following level repulsion estimate for stable random matrices.  It will be used for the comparison of the single gap statistics between $H$ and $H_t$ in Section \ref{bulkH}. 
\begin{theorem} \label{tlevp}
Let $H$ be a stable random matrix as defined in Section \ref{def23}. Given any $0<\tau<\alpha/8$, any (small) number $\kappa>0$, and any index $i\in [[\kappa N, (1-\kappa)N]]$, we have
\begin{align}\label{levp}
\PP(|\lambda_i(H)-\lambda_{i+1}(H)|\leq N^{-1-\tau})\leq N^{-\tau/2}.
\end{align} 
\end{theorem}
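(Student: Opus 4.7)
The strategy is in two steps: first establish strong level repulsion for the Gaussian-divisible matrix $H_t$ at a short time $t=N^{-1+\epsilon}$ with $\epsilon$ small, then transfer this estimate back to $H_0=H$ via the Ornstein--Uhlenbeck continuity lemma of \cite{BoYa}. The OU continuity will yield $|\EE[\Phi(H_0)]-\EE[\Phi(H_t)]|\leq \tfrac12 N^{-\tau/2}$ for a suitable smooth indicator $\Phi$ of small gaps, provided the third-moment decay (stability condition \ref{moment}) and the eigenvector/eigenvalue control (condition \ref{stable}) both hold.

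\textbf{Step 1.} Condition on $H_t^{(1)}$ from the decomposition \eqref{modOU}. By condition \ref{loclaw}, $H_t^{(1)}\in\D$ with overwhelming probability, and on this event $H_t$ is a Gaussian perturbation of $H_t^{(1)}$ of variance $\thetat^2=O(t)$. The level repulsion estimate for DBM proved in \cite{ly} applies to the conditional law of $H_t$ and, for $\tau$ and $\epsilon$ in the relevant range, yields a bound of the form
\begin{equation*}
\PP\bigl(|\lambda_i(H_t)-\lambda_{i+1}(H_t)|\leq 2N^{-1-\tau}\bigm|H_t^{(1)}\in\D\bigr)\leq \tfrac12 N^{-\tau/2}.
\end{equation*}
Averaging over $H_t^{(1)}$ and absorbing the overwhelmingly small event $\{H_t^{(1)}\notin\D\}$ gives a bound of the same order unconditionally.

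\textbf{Step 2.} Let $F:\RR\to[0,1]$ be smooth with $F\equiv 1$ on $[-1,1]$ and $\mathrm{supp}\,F\subseteq[-2,2]$, and set $\Phi(H):=F(N^{1+\tau}(\lambda_i(H)-\lambda_{i+1}(H)))$. Applying It\^o's formula to $s\mapsto\Phi(H_s)$ along \eqref{DBM} and taking expectation, the OU drift and Brownian quadratic-variation terms combine via a cumulant expansion in $h_{ij}-\f$: the second cumulants (Gaussian parts) cancel exactly, leaving terms proportional to $\EE[(h_{ij}-\f)^k]$ for $k\geq 3$ times matrix-entry derivatives of $\Phi$ of order $k$. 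Stability condition \ref{moment} bounds the third cumulant by $N^{-1-\alpha}$. Integrating over $s\in[0,t]$, summing over the $O(N^2)$ entries, and choosing $\epsilon$ small depending on $\tau<\alpha/8$, the total continuity error is $\leq\tfrac12 N^{-\tau/2}$. Combined with Step 1 this closes the argument via
\begin{equation*}
\PP(|\lambda_i(H)-\lambda_{i+1}(H)|\leq N^{-1-\tau})\leq \EE[\Phi(H_0)]\leq \EE[\Phi(H_t)]+\tfrac12 N^{-\tau/2}\leq N^{-\tau/2}.
\end{equation*}

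\textbf{Main obstacle.} The technical core of Step 2 is bounding the matrix-entry derivatives of $\Phi$ up to order three. These involve inverses of eigenvalue gaps $(\lambda_k-\lambda_\ell)^{-1}$ and products of eigenvector components, and a naive bound would require level repulsion itself -- precisely the statement to be proved. This circularity is resolved by applying condition \ref{stable} to the deformed matrices $\theta^{ab}H_s$: their eigenvectors are completely delocalized, $\sup_{i,j}|u_i(j)|^2\leq CN^{-1+\delta}$, and their spectra do not accumulate, $\#\{k:\lambda_k\in I\}\leq CN|I|$ for $|I|\geq N^{-1+\delta}$. Neither bound uses level repulsion, yet together with the third-moment decay $N^{-1-\alpha}$ they are enough to control the derivatives of $\Phi$ and close the continuity estimate for $\tau<\alpha/8$.
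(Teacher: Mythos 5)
Your overall architecture (level repulsion for $H_t$ from \cite{ly}, then transfer to $H_0$ by the Ornstein--Uhlenbeck continuity lemma, using the third-moment decay and the $\delta$-general property of $\theta^{ab}H_s$) is the same as the paper's. But Step 2 has a genuine gap: the observable $\Phi(H)=F\bigl(N^{1+\tau}(\lambda_i(H)-\lambda_{i+1}(H))\bigr)$ is not an admissible test function for Lemma \ref{compare}, and the claim in your ``Main obstacle'' paragraph that delocalization plus non-accumulation suffice to bound its matrix-entry derivatives is false. The second and third derivatives of $\lambda_i$ and $\lambda_{i+1}$ (see \eqref{sd}, \eqref{td}) contain factors $(\lambda_i-\lambda_{i-1})^{-1}$, $(\lambda_{i+1}-\lambda_{i+2})^{-1}$ and products of such resolvent-type factors for all neighbors $j$. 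The support of $F'$ only forces $|\lambda_i-\lambda_{i+1}|\geq N^{-1-\tau}$; it gives no lower bound on the adjacent gaps, and the non-accumulation condition in Definition \ref{general} only limits the number of eigenvalues in intervals of length at least $N^{-1+\delta}$ --- it is perfectly compatible with $\lambda_{i+1}=\lambda_{i+2}$, in which case $\Phi$ is not even three times differentiable (ordered eigenvalues fail to be smooth at degeneracies). A lower bound on individual neighboring gaps is exactly the level repulsion you are trying to prove, so your regularization does not break the circularity; without it the quantity $B$ in Lemma \ref{compare} cannot be bounded by anything like $N^{1+o(1)-\alpha}$.

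The paper's way around this is the key missing idea: instead of smoothing the gap indicator directly, it works with $Q_i(A)=N^{-2}\sum_{j\neq i}|\lambda_i-\lambda_j|^{-2}$ composed with a cutoff $\chi_M$, $M=N^{2\tau}$. Because $\chi_M$ is constant above $M$, every nonvanishing derivative of $\chi_M(Q_i(\theta^{ab}H_s))$ lives on the event $Q_i\leq M$, which forces \emph{all} gaps $|\lambda_i-\lambda_j|\geq N^{-1-\tau}$ simultaneously; combined with the $\delta$-general bounds this yields the derivative estimates of Proposition \ref{bdrQ}, and $\chi_M(Q_i(\cdot))$ is globally smooth (constant near degenerate matrices). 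Then Theorem \ref{Thlevpt} gives $\EE[\chi_M(Q_i(H_t))]\leq CN^{3\tau/2}$, Lemma \ref{compare} transfers this to $H_0$ with error $CN^{\epsilon+12\delta+9\tau-\alpha}$, and Markov's inequality finishes, since a gap $\leq N^{-1-\tau}$ forces $Q_i\geq N^{2\tau}$. To repair your argument you would either have to adopt this $Q_i$-cutoff (or an equivalent regularization whose derivatives vanish unless all nearby gaps are $\gtrsim N^{-1-\tau}$); the smoothed indicator of a single gap cannot work as written.
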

\begin{remark}
The above estimate suffices for the comparison of the single gap statistics of $H$ and $H_t$.  We have not tried to optimize the exponent $-\tau/2$, which is far from optimal. The proof below is easily modified to give $-\tau +\delta$ for any $\delta >0$.
\end{remark}

It was proven in \cite{ly} that a level repulsion estimate holds for the matrix $H_t$ for $t = N^{-1+\epsilon}$. To obtain the level repulsion estimate for $H$, we need to prove that the change of eigenvalues up to time $t=N^{-1+\epsilon}$ is negligible. For this we will repeatedly use the following lemma which asserts continuity of DBM when viewed as a matrix Ornstein-Uhlenbeck process.  It is a minor modification of \cite[Lemma A.2]{BoYa}.

\begin{lemma}{\label{compare}}
Let $H$ be an $N\times N$ real symmetric random matrix $H=(h_{ij})_{1\leq i,j\leq N}$, where the $h_{ij}$'s are independent up to symmetry constraint $h_{ij} = h_{ji}$. Suppose that for constants $c_1, c_2, c_3$ its entries satisfy $\EE[h_{ij}]=\f$ and $\EE[\left(h_{ij}-\f\right)^2]=s_{ij}$, where $c_1N^{-1}\leq s_{ij}\leq c_2N^{-1}$ and $\f\geq 0$ may depend on $N$. 

Define $H_t$ as in \eqref{DBM}. Let $F$ be a smooth function on the space of real symmetric matrices satisfying
\begin{align*}
\sup_{0\leq s\leq t, a\leq b} \sup_{ \theta^{ab}} \EE\left[(N^2|h_{ab}(s)-\f|^3+N|h_{ab}(s)-\f|)|\del_{ab}^{(3)} F(\theta^{ab}H_s)|\right]\leq B
\end{align*}
Above, the deformed matrix $\theta^{ab} H_s$ is defined by $(\theta^{ab}H)_{kl}=f+\theta^{ab}_{kl}\left(h_{kl}-f\right)$, where $\theta_{kl}^{ab}=1$ unless $\{k,l\}=\{a,b\}$ and $\theta_{ab}^{ab} = \theta_{ba}^{ab}$ is a number satisfying $0\leq \theta_{ab}^{ab}=\theta_{ba}^{ab}\leq 1$. Then
\begin{align}{\label{diff}}
|\EE[F(H_t)]-\EE[F(H_0)]|\leq C tB.
\end{align}
\end{lemma}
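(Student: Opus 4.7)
The proof is an It\^o/generator argument that capitalizes on the fact that the matrix Ornstein-Uhlenbeck dynamics \eqref{DBM} is tuned so that the stationary variance of each scalar process $h_{ab}(s)-\f$ equals $s_{ab}$, the true variance of the entry. Applying It\^o's formula to $F(H_s)$ along \eqref{DBM}, taking expectation, and using independence of the driving Brownians together with standard symmetric-matrix bookkeeping yield
\begin{align*}
\frac{\d}{\d s}\EE[F(H_s)]=\sum_{a\leq b}\left\{\frac{c_{ab}}{N}\EE[\del_{ab}^2F(H_s)]-\frac{1}{2Ns_{ab}}\EE[(h_{ab}(s)-\f)\del_{ab}F(H_s)]\right\},
\end{align*}
where $c_{ab}$ is a harmless $O(1)$ symmetry factor. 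The task is to show each summand is $O(B/N^2)$ so that the full sum is $O(B)$, whence integration on $[0,t]$ gives \eqref{diff}.

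Fix a pair $a\leq b$, write $x_{ab}:=h_{ab}(s)-\f$, and let $H_s^{(0)}$ denote the matrix obtained from $\theta^{ab}H_s$ by taking $\theta^{ab}_{ab}=0$, so that the $(a,b)$-entry of $H_s^{(0)}$ is replaced by $\f$. The central step is a Taylor expansion in $x_{ab}$:
\begin{align*}
\del_{ab}F(H_s)&=\del_{ab}F(H_s^{(0)})+x_{ab}\del_{ab}^2F(H_s^{(0)})+\tfrac12 x_{ab}^2\del_{ab}^3F(\theta_\ast^{ab}H_s),\\
\del_{ab}^2F(H_s)&=\del_{ab}^2F(H_s^{(0)})+x_{ab}\del_{ab}^3F(\theta_\star^{ab}H_s),
\end{align*}
with interpolating matrices $\theta_\ast^{ab},\theta_\star^{ab}$ of precisely the form permitted in the hypothesis. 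Since the entries of $H_s$ are mutually independent (\eqref{DBM} preserves entrywise independence because the scalar OU equations decouple), $x_{ab}$ is independent of every quantity evaluated at $H_s^{(0)}$; together with $\EE[x_{ab}]=0$ and $\EE[x_{ab}^2]=s_{ab}$ this gives
\begin{align*}
\EE[x_{ab}\del_{ab}F(H_s)]=s_{ab}\EE[\del_{ab}^2F(H_s^{(0)})]+\tfrac12\EE[x_{ab}^3\del_{ab}^3F(\theta_\ast^{ab}H_s)].
\end{align*}
Substituting into the generator identity, the zeroth- and first-order terms in $\del_{ab}^2F(H_s^{(0)})$ cancel exactly (this is the OU variance-matching identity $\sigma^2/2=\theta s_{ab}$ and is the entire raison d'\^etre of the drift coefficient $(2Ns_{ab})^{-1}$), leaving only third-derivative remainders.

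The surviving error per pair therefore has the form
\begin{align*}
R_{ab}=O\!\left(\frac{1}{N}\EE[|x_{ab}|\,|\del_{ab}^3F(\theta_\star^{ab}H_s)|]\right)+O\!\left(\frac{1}{Ns_{ab}}\EE[|x_{ab}|^3\,|\del_{ab}^3F(\theta_\ast^{ab}H_s)|]\right).
\end{align*}
The hypothesis directly supplies $\EE[|x_{ab}|\,|\del_{ab}^3F|]\leq B/N$ and $\EE[|x_{ab}|^3\,|\del_{ab}^3F|]\leq B/N^2$, while $s_{ab}\geq c_1/N$ gives $(Ns_{ab})^{-1}\leq c_1^{-1}$. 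Hence $|R_{ab}|\leq CB/N^2$, and summing over the $\tfrac12 N(N+1)$ pairs followed by integration in $s\in[0,t]$ yields the claimed estimate $|\EE[F(H_t)]-\EE[F(H_0)]|\leq CtB$. The only delicate point in executing this plan is bookkeeping the symmetry factors $c_{ab}$ for diagonal versus off-diagonal entries so that the leading-order cancellation is exact; this is routine, and since the whole argument boils down to the scalar OU variance-matching applied entrywise, no serious conceptual obstacle is anticipated.
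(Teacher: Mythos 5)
Your proposal is correct and is essentially the argument behind the result the paper invokes: the paper does not prove Lemma \ref{compare} itself but cites it as a minor modification of \cite[Lemma A.2]{BoYa}, whose proof is exactly your It\^o-plus-Taylor scheme, with the OU drift coefficient $(2Ns_{ab})^{-1}$ matching the preserved variance $\EE[(h_{ab}(s)-\f)^2]=s_{ab}$ so that the second-order terms cancel and only third-derivative remainders, controlled by the hypothesis on $B$, survive. The only cosmetic refinement is to use the integral form of the Taylor remainder, so that the bound aligns with the supremum over $\theta^{ab}$ taken outside the expectation in the hypothesis.
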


Given a real symmetric matrix $A$, we denote its eigenvalues by $\{\lambda_{1}, \lambda_{2},\cdots, \lambda_{N}\}$, and corresponding eigenvectors $\{u_1, u_2,\cdots, u_N\}$. If $\lambda_i$ is a simple eigenvalue of $A$, we define $P_i$ to be the orthogonal projection to the one-dimensional eigenspace corresponding to $\lambda_i$, and the resolvent $R_i(A)$ the unique real symmetric matrix inverting $\lambda_i-A$ on the range of $I-P_i(A)$, and vanishing on the range of $P_i(A)$. $R_i(A)$ can be written explicitly as
\begin{align*}
R_i(A)=\sum_{j: j\neq i} \frac{1}{\lambda_i-\lambda_j}u_ju_j^*.
\end{align*} 
Moreover $R_i(A)$ can be written as the following contour integral,
\begin{align}
\label{cont}R_i(A)=\frac{1}{2\pi i} \oint_{|z-\lambda_i|=\omega} \frac{1}{\lambda_i -z}\frac{1}{A-z}dz,
\end{align}
where we pick the contour $\omega$ to enclose only $\lambda_i$. From the above formula it is clear that $R_i(A)$ is a smooth function on a neighbourhood of $A$ if $\lambda_i$ is a single eigenvalue. We refer the reader to the book \cite[Chapter XII]{ReSi} for related properties.

If $\lambda_i$ is a single eigenvalue of $A$, we define the quantity 
\begin{align}\label{Q}
Q_i(A)=\frac{1}{N^2}\Tr(R_i(A)^2)=\frac{1}{N^2}\sum_{j:j\neq i}\frac{1}{|\lambda_j-\lambda_i|^2}.
\end{align}
This quantity plays an important role in \cite{tao2010random}, where it was observed that it captures quantitatively the derivatives of $\lambda_i(A)$. Here we write it in terms of the Green function, and prove it is stable under the DBM \eqref{DBM}. As a result, based on the idea of Green function comparison, it can be used to derive the weak level repulsion estimate Theorem \ref{tlevp}, once we know such an estimate for larger times, see Theorem \ref{Thlevpt}.

Since $Q_i(A)$ is not well-defined on the space of real symmetric matrices (it will blow up when $\lambda_i(A)$ is not a single eigenvalue), we have to compose it with a cutoff function $\chi_M$, where $M:=N^{2\tau}$ and the (small) constant $\tau>0$ will be chosen later. We choose the cutoff function $\chi_M(x)$ which satisfies the following two properties: (1) $\chi_M$ is smooth, and the first three derivatives are bounded by some constant $C$, i.e. $|\chi_M'(x)|, |\chi_M''(x)|, |\chi_M'''(x)|\leq C$. (2) On the interval $[0,M]$, $|\chi_M(x)-x|\leq 1$, and  for $x\geq M$, $\chi_M(x)=M$.

If $\lambda_i$ is a single eigenvalue of $A$, then in a neighborhood of $A$, $\chi_{M}(Q_i(A))$ is smooth; if $\lambda_i$ is not a single eigenvalue of $A$, then in a neighborhood of $A$, $\chi_{M}(Q_i(A))$ is constant, which is also smooth. Therefore $\chi_{M}(Q_i(A))$ is a well defined smooth function on the space of real symmetric matrices.

Our proof of Theorem \ref{tlevp} consists of three steps:
\begin{enumerate}
\item We use the level repulsion estimate for $H_t$ to conclude the estimate $\EE[\chi_M(Q_i(H_t))]\leq CN^{3\tau/2}$ for $t=N^{-1+\epsilon}$.
\item We compare $\EE[\chi_M(Q_i(H_t))]$ for $t=N^{-1+\epsilon}$ and $\EE[\chi_M(Q_i(H_0))]$ using the continuity Lemma \ref{compare}. Since $\eps>0$ can be chosen arbitrarily small we will obtain
\begin{align}
|\EE[\chi_M(Q_i(H_t))]-\EE[\chi_M(Q_i(H_0))]|\leq CN^{\tau}.
\end{align}
\item Theorem \ref{tlevp} then immediately follows from $\EE[\chi_M(Q_i(H_0))]\leq CN^{3\tau/2}$ and the Markov inequality.
\end{enumerate}
In the remainder of this section we denote the eigenvalues of $H_t$ by $\lambda_1(t)\leq \lambda_2(t)\leq \cdots \leq \lambda_N(t)$,  and so the eigenvalues of $H$ are $\lambda_1(0)\leq \lambda_2(0)\leq \cdots \leq \lambda_N(0)$.

The following level repulsion estimate is an immediate consequence of \cite[Theorem 3.6]{ly}.
\begin{theorem}\label{Thlevpt}
Let $H$ be a stable random matrix as defined in Section \ref{def23}. Given any (small) number $\omega >0$, and (large) number $D>0$, we have that
\begin{align}\label{levpt}
\PP\left(\left|\lambda_{i}(t)-\lambda_{i+1}(t)\right|\leq \theta N^{-1}\right)\leq N^{\omega}\theta^{2-\omega}+ N^{-D}
\end{align}
for any $i\in[[\kappa N, (1-\kappa)N]]$ and $t\geq N^{-1+\epsilon}$.
\end{theorem}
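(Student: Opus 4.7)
The plan is to derive this by conditioning on the matrix $H_t^{(1)}$ from the decomposition \eqref{modOU} and then invoking the level repulsion result for matrix DBM with deterministic initial data established in \cite[Theorem 3.6]{ly}. Concretely, I would write $H_t \stackrel{d}{=} H_t^{(1)} + \thetat G$ with $G$ a standard GOE independent of $H_t^{(1)}$ and $\thetat \sim t^{1/2}$; conditional on $H_t^{(1)} = A$, the distribution of $H_t$ is precisely the Gaussian divisible ensemble $A_t$ studied in Section \ref{bulkHt}.

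For a realization of $A \in \D$ (the set defined in \eqref{defD}), Lemma \ref{lem:mbds} gives a uniform estimate $|m_t(z) - \msc(z)| \leq N^{-\omega}$ on the relevant spectral domain, which in particular provides the lower and upper bounds on $\mathrm{Im}[m_t]$ in the bulk and confines the spectrum to an interval of order one except near $N^C$. These are exactly the regularity hypotheses required to apply \cite[Theorem 3.6]{ly} to the Gaussian convolution $A + \thetat G$ at times $t \geq N^{-1+\eps}$; that theorem yields
\begin{align}
\PP\bigl(|\lambda_i(t) - \lambda_{i+1}(t)| \leq \theta N^{-1} \,\big|\, H_t^{(1)} = A\bigr) \leq N^{\omega} \theta^{2-\omega}
\end{align}
for any $i \in [[\kappa N, (1-\kappa)N]]$ and for arbitrarily small $\omega > 0$.

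To conclude, I would take the expectation over $H_t^{(1)}$. By the stability property \ref{loclaw} of Definition \ref{stabledef}, the event $\{H_t^{(1)} \in \D\}$ has probability at least $1 - N^{-D'}$ for any $D'>0$. On this event, the conditional bound above applies directly; on the complementary event, which has probability at most $N^{-D'}$, we trivially bound the conditional probability by $1$. Choosing $D'$ larger than the desired $D$ yields \eqref{levpt}.

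There is no essential obstacle here — the entire argument is a conditioning-plus-overwhelming-probability reduction to \cite[Theorem 3.6]{ly}. The one point to verify carefully is that the constants in the regularity hypothesis of \cite{ly} (lower bound on $\mathrm{Im}[m_t]$ in the bulk, confinement of spectrum) are indeed controlled uniformly by the parameter $\omega$ of the set $\D$ via Lemma \ref{lem:mbds}, which we have already established. This is what justifies calling the theorem an "immediate consequence."
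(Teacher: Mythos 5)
Your proposal is correct and matches the paper's (implicit) argument: the paper states the theorem as an immediate consequence of \cite[Theorem 3.6]{ly}, and the intended derivation is precisely your conditioning on the event $\{H_t^{(1)} \in \D\}$ (which holds with overwhelming probability by condition \ref{loclaw} of Definition \ref{stabledef}), applying the DBM level repulsion estimate to $A + \thetat G$ on that event, and bounding the complementary event by $N^{-D}$. The only cosmetic point is that the regularity hypothesis of \cite{ly} concerns the initial data's Stieltjes transform $m_0$, which is supplied directly by the definition \eqref{defD} of $\D$, so invoking Lemma \ref{lem:mbds} for $m_t$ is not needed for this step (though harmless).
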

From this we derive the following estimate.
\begin{lemma}
Let $H$ be a stable random matrix.  Then,
\begin{align}
\EE [ \chi_M ( Q_i (H_t ) ) ] \leq C N^{3 \tau/2}
\end{align}
for $t = N^{-1+\eps}$ for any $\eps >0$ and all small $\tau >0$.
\end{lemma}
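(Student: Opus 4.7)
The plan is to combine the nearest-neighbor level repulsion of Theorem~\ref{Thlevpt} with optimal rigidity for $H_t$ to obtain a polynomial tail bound on $Q_i(H_t)$, which is then integrated against the layer-cake representation of $\EE[\chi_M(Q_i(H_t))]$. Conditioning on the event $H_t^{(1)}\in\D$ (which has overwhelming probability by stability), \cite[Theorem 3.3]{ly} applied with $A=H_t^{(1)}$ yields $|\lambda_i(t)-\gamma_{i,t}^H|\leq N^{-1+\delta/2}$ for bulk indices, and Lemma~\ref{lem:determbds} lets us replace $\gamma_{i,t}^H$ by $\gamma_i$ at negligible cost. In particular $|\lambda_{i+k}(t)-\lambda_i(t)|\geq c|k|/N$ for bulk $i$ and $|k|\geq C N^\delta$ with overwhelming probability.

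Fix a small $\delta>0$ and split
\begin{align*}
Q_i(H_t)=\frac{1}{N^2}\sum_{0<|k|<CN^\delta}\frac{1}{(\lambda_{i+k}(t)-\lambda_i(t))^2}+\frac{1}{N^2}\sum_{|k|\geq CN^\delta}\frac{1}{(\lambda_{i+k}(t)-\lambda_i(t))^2}.
\end{align*}
The far sum is deterministically bounded by $\sum_{|k|\geq CN^\delta}4/k^2\leq C_1$ on the rigidity event. Writing $g_i^*:=\min(\lambda_{i+1}(t)-\lambda_i(t),\lambda_i(t)-\lambda_{i-1}(t))$, the close sum is at most $2CN^\delta/(Ng_i^*)^2$. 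Hence for $s\geq 2C_1$,
\begin{align*}
\PP(Q_i(H_t)>s)\leq 2\,\PP\!\left(|\lambda_{i+1}(t)-\lambda_i(t)|\leq \frac{\sqrt{2CN^\delta/(s-C_1)}}{N}\right)+N^{-D},
\end{align*}
after applying Theorem~\ref{Thlevpt} to the two consecutive gaps near $i$ (the index $i-1$ is still in the bulk up to shrinking $\kappa$). For any chosen $\omega>0$ this yields $\PP(Q_i(H_t)>s)\leq C_2 N^{\omega+\delta}\,s^{-1+\omega/2}+N^{-D}$ once $s$ is large enough that the argument is $\leq 1$.

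Finally, using $\chi_M(x)\leq\min(x+1,M)$ and the layer-cake formula, for any $T\geq 1$,
\begin{align*}
\EE[\chi_M(Q_i(H_t))]\leq T+\int_T^M\PP(Q_i(H_t)>s-1)\,ds+MN^{-D}.
\end{align*}
Taking $T=N^{3\tau/2}$ and $\omega=\delta=\tau/10$ (admissible since $\tau<\alpha/8$), and using $M=N^{2\tau}$, the integral is bounded by $C_3\omega^{-1}N^{\omega+\delta}M^{\omega/2}\leq C_\tau N^{3\tau/2}$, since $\omega+\delta+\tau\omega<3\tau/2$. This delivers the stated bound.

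The main technical obstacle is promoting the weak local law for $H_t^{(1)}$ (hypothesis \ref{loclaw}) to optimal rigidity of $H_t$ with respect to the deterministic semicircle quantiles $\gamma_i$. This is achieved by conditioning on $H_t^{(1)}\in\D$, applying \cite[Theorem 3.3]{ly} to obtain optimal rigidity with respect to the random quantiles $\gamma_{i,t}^H$, and then invoking Lemma~\ref{lem:determbds} to transfer to $\gamma_i$; each ingredient is already established in the paper.
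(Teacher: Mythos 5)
Your route is viable and genuinely different from the paper's. The paper never invokes rigidity of $H_t$: it intersects the level repulsion event of Theorem \ref{Thlevpt} with the event that $H_t$ is $\delta$-general (condition \ref{stable} of Definition \ref{stabledef}), controls the non-neighboring eigenvalues' contribution to $Q_i$ by a dyadic count using the non-accumulation property $\#\{j:\lambda_j\in I\}\leq C|I|N$, and then makes a single threshold choice $\theta=N^{-\tau/2}$, bounding $\EE[\chi_M(Q_i(H_t))]\leq 1+3CN^{\delta}\theta^{-2}+M(N^{\omega}\theta^{2-\omega}+N^{-D})$; no layer-cake integration and no optimal rigidity are needed, which keeps the argument within the stability hypotheses alone. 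You instead derive a polynomial tail $\PP(Q_i>s)\lesssim N^{\omega+\delta}s^{-1+\omega/2}$ from \cite[Theorem 3.3]{ly} (conditioned on $H_t^{(1)}\in\D$) plus nearest-gap repulsion, and integrate; this also gives the claim, at the cost of importing rigidity of $H_t$. One step as written is not correct, though it is easily repaired: Lemma \ref{lem:determbds} (via \cite[Lemma 7.17]{ly}) only gives $|\gamma_{i,t}^H-\gamma_i|\leq N^{-\omega/2}$, which is enormous compared to the $1/N$ scale, so it cannot be used to ``replace $\gamma_{i,t}^H$ by $\gamma_i$ at negligible cost'' in a spacing estimate of order $|k|/N$. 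Fortunately you do not need $\gamma_i$ at all: rigidity with respect to the random quantiles $\gamma_{i,t}^H$ together with the upper bound on $\rho_t^H$ in the bulk (available on $\{H_t^{(1)}\in\D\}$ from \cite[Lemma 7.1]{ly}, as used in the proof of Lemma \ref{lem:determbds}) already yields $\gamma_{i+k,t}^H-\gamma_{i,t}^H\geq c|k|/N$ and hence the far-sum bound for bulk indices; indices outside the bulk are handled by eigenvalue ordering and the macroscopic separation of their quantiles from $\gamma_{i,t}^H$ (shrink $\kappa$ slightly, as you do for the index $i-1$). With that correction, and the minor bookkeeping that $\chi_M(x)\leq\min(x+1,M)+1$, your proof is complete; the paper's version is shorter and, because it relies only on the $\delta$-general hypothesis, extends verbatim to settings where optimal rigidity for $H_t$ is not available.
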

\begin{proof}
By our assumption with probability larger than $1-N^{-D}$, the matrix $H_t$ is $\delta$-general in the sense of Definition \ref{general}. Combining this with Theorem \ref{Thlevpt}, we have 
\begin{align*}
\PP\left(H_t \text{ is $\delta$-general and } \left|\lambda_{i}(t)-\lambda_{i\pm1}(t)\right|\geq \theta N^{-1}\right)\geq 1-N^{\omega}\theta^{2-\omega}-N^{-D}.
\end{align*} 
Define a dyadic decomposition:
\begin{align*}
&U_0=\{j\neq i:|\lambda_j(t)-\lambda_i(t)|\leq  N^{-1+\delta}\},\quad U_{\infty}=\{j: N^{\delta}< |\lambda_j(t)-\lambda_i(t)|\}\\
&U_n=\{j: 2^{n-1}N^{-1+\delta}< |\lambda_j(t)-\lambda_i(t)|\leq 2^n N^{-1+\delta}\}, \quad 1\leq n\leq \lceil\log_2N\rceil.
\end{align*}
For those $H_t$ which are $\delta$-general in the sense of Definition \ref{general}, $|U_n|\leq C2^n N^{\delta}$, for $0\leq n\leq \lceil\log_2 N\rceil$.  On the event that $H_t$ is $\delta$-general and $| \lambda_i(t) - \lambda_{i \pm 1}(t) | \geq \theta N^{-1}$, we derive the estimate,
\begin{align}
\label{ddsum}Q_i(H_t)
=&  \frac{1}{N^2}\sum_{n=0}^{\lceil \log_2 N\rceil}\sum_{j\in U_n}\frac{1}{|\lambda_i(t)-\lambda_j(t)|^2} + \frac{1}{N^{1+2\delta}}\leq 3CN^{\delta}\theta^{-2}.
\end{align}
Therefore we have
\begin{align*}
\PP\left(\frac{1}{N^2}\sum_{j: j\neq i} \frac{1}{|\lambda_i(t)-\lambda_j(t)|^2}\geq 3CN^{\delta}\theta^{-2}\right)\leq N^{\omega}\theta^{2-\omega}+N^{-D}
\end{align*}
where $\omega>0$ can be any small number. Therefore
\begin{align*}
\EE[\chi_M(Q_i(H_t))]\leq 1+3CN^{\delta}\theta^{-2}+M(N^{\omega}\theta^{2-\omega}+ N^{-D}).
\end{align*}
If we take $\theta=N^{-\tau/2}$, then we get $\EE[\chi_M(Q_i(H_t))]\leq CN^{3\tau/2}$, if we take $\delta<\tau/2$, $\omega< \tau/4$ and $D$ large. 
\end{proof}

This finishes the proof of the first step.  In order to apply Lemma \ref{compare} for the second step, we need to control the third derivative of $\chi_M(Q_i(\theta^{ab}H_s))$.

\begin{proposition}\label{bdrQ}
Let $A$ be an $N\times N$ deterministic real symmetric matrix. 
If $A$ is $\delta$-general in the sense of Definition \ref{general} and 
\begin{align}
\label{gapb}Q_i(A)=\frac{1}{N^2}\sum_{j:j\neq i}\frac{1}{|\lambda_{i}(A)-\lambda_{j}(A)|^2}\leq M=N^{2\tau},
\end{align} then 
\begin{align}\label{drQ}
\del_{ab}^{(k)}Q_i(A)\leq CN^{(2k+2)\delta+(k+2)\tau},\quad k=1,2,3.
\end{align}
for some constant $C$.
\end{proposition}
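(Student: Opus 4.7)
The plan is to express $Q_i(A)$ as a contour integral of resolvent traces so that matrix-entry derivatives can be computed with the contour held fixed, then reduced to uniform bounds on the resolvent entries using eigenvector delocalization and a dyadic eigenvalue estimate.

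Since $Q_i(A)\leq M = N^{2\tau}$, every gap satisfies $|\lambda_i - \lambda_j|\geq N^{-1-\tau}$ for $j\neq i$, so I would fix the circle
\begin{align*}
\Gamma := \{z \in \CC : |z - \lambda_i(A)| = \omega\},\qquad \omega := \tfrac{1}{2}N^{-1-\tau},
\end{align*}
which encloses only $\lambda_i$. A residue computation on $(A-z)^{-2}=\sum_j u_ju_j^*/(\lambda_j-z)^2$ yields
\begin{align*}
Q_i(A) = \frac{1}{2\pi\i N^2}\oint_\Gamma \frac{\Tr((A-z)^{-2})}{z-\lambda_i(A)}\,dz,
\end{align*}
together with the analogous formula $\lambda_i(A) = -\frac{1}{2\pi\i}\oint_\Gamma z\,\Tr((A-z)^{-1})\,dz$. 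Sufficiently small perturbations of $h_{ab}$ leave $\lambda_i$ strictly inside $\Gamma$, so the contour can be held independent of $h_{ab}$ and the derivative $\del^{(k)}_{ab}$ passes through the integral.

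Iterating the resolvent identity $\del_{h_{ab}}(A-z)^{-1} = -(A-z)^{-1}(E^{ab}+E^{ba})(A-z)^{-1}$ and applying the Leibniz rule, $\del^{(k)}_{ab}$ of the integrand expands into a finite sum of products of (i) resolvent matrix entries $((A-z)^{-m_l})_{e_l f_l}$ with $e_l,f_l\in\{a,b\}$, (ii) pole factors $(z-\lambda_i)^{-p}$, and (iii) eigenvalue derivatives $\del^{(s_r)}_{ab}\lambda_i$, which are handled by the same contour technique applied to the representation of $\lambda_i$. The key ingredients for uniform estimates on $\Gamma$ are the eigenvector delocalization $|u_j(e)u_j(f)|\leq CN^{-1+\delta}$ and a dyadic decomposition of $\{j:j\neq i\}$ by $|\lambda_i-\lambda_j|$, combining $Q_i(A)\leq M$ with the non-accumulation part of $\delta$-generality (exactly as in the proof of the preceding lemma), which yields
\begin{align*}
\sum_{j\neq i}\frac{1}{|\lambda_i-\lambda_j|^m}\leq CN^{m(1+\tau)}\qquad \text{for }m\geq 2.
\end{align*}
Together with $|z-\lambda_j|\geq\tfrac{1}{2}|\lambda_i-\lambda_j|$ for $z\in\Gamma$, $j\neq i$, these give the uniform estimate $|((A-z)^{-m})_{ef}|\leq CN^{m(1+\tau)-1+\delta}$, while a direct residue computation gives the sharp bound $|\del_{ab}\lambda_i|=2|u_i(a)u_i(b)|\leq CN^{-1+\delta}$ and analogous sharp bounds for the higher derivatives $\del^{(s)}_{ab}\lambda_i$.

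Assembling the estimates in a generic term of $\del^{(k)}_{ab}Q_i(A)$: the Leibniz rule produces finitely many terms, in each of which $p + \sum_l m_l \leq k+3$, $\sum_r s_r \leq k$, and there are at most $k$ ``delocalized'' factors (matrix entries and eigenvalue derivatives), each contributing an additional $N^{-1+\delta}$. Multiplying by the contour length $O(\omega)=O(N^{-1-\tau})$ and the prefactor $N^{-2}$, careful tallying shows that every term is bounded by $N^{(k+2)\tau + O(k)\delta}$, comfortably within the claimed $N^{(2k+2)\delta+(k+2)\tau}$ for $k\in\{1,2,3\}$. The main technical point is the combinatorial bookkeeping in the Leibniz expansion, together with the care needed to exploit sharp bounds on derivatives of $\lambda_i$ (rather than crude sup bounds that lose an extra factor of $N^\tau$); however, since the key estimates depend only on the total exponents (not their individual distribution) and the number of distinct terms for $k\leq 3$ is a small constant, each term is controlled by the same pair of estimates.
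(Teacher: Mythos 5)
Your proposal is correct and rests on the same ingredients as the paper's proof: the contour representation of $Q_i$ and of $\lambda_i$, the identity $\del_{ab}(A-z)^{-1}=-(A-z)^{-1}V(A-z)^{-1}$ with the Leibniz rule, eigenvector delocalization, and the gap/moment bounds coming from $Q_i(A)\leq N^{2\tau}$ together with the non-accumulation part of $\delta$-generality. Where you genuinely deviate is in the final estimation step: the paper evaluates the contour integrals exactly by residues, producing the explicit lists of eigenvalue sums (with numerators $V_{jk}=u_j^*Vu_k$ and denominators $\prod(\lambda_i-\lambda_j)$), and then bounds those sums; you instead fix the contour $|z-\lambda_i|=\tfrac12N^{-1-\tau}$, bound the integrand uniformly on it using $|z-\lambda_j|\geq\tfrac12|\lambda_i-\lambda_j|$, and multiply by the contour length. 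Your exponent bookkeeping ($\tau$-count $(k+2)$ from the total resolvent-plus-pole power minus the contour length, integer count $0$, and at most $k$ delocalized factors) checks out and reproduces the paper's $N^{(k+2)\tau+O(k)\delta}$, so the ML-type estimate buys you a shorter argument that avoids the "straightforward but tedious" residue computations, at the cost of making the term structure less explicit. One point you should state rather than leave implicit: your uniform bound on first-power resolvent entries $((A-z)^{-1})_{ef}$, and likewise the "sharp" bounds on $\del^{(2)}_{ab}\lambda_i$ and $\del^{(3)}_{ab}\lambda_i$, require the first-moment estimate $\sum_{j\neq i}|\lambda_i-\lambda_j|^{-1}\leq CN^{1+\tau+\delta}$, which is exactly where the dyadic decomposition and the non-accumulation hypothesis enter (your displayed bound only covers $m\geq 2$, where Cauchy--Schwarz already suffices); including it costs an extra $N^{\delta}$ per such factor but stays well within the stated $N^{(2k+2)\delta+(k+2)\tau}$.
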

\begin{proof}
We denote $G=(A-z)^{-1}$ the resolvent of $A$ and by $\lambda_j$ and $u_j$ the eigenvalues and eigenvectors of $A$. Notice that \eqref{gapb} implies $|\lambda_{i}-\lambda_{i\pm1}|\geq N^{-1-\tau}$. The same dyadic argument leading to \eqref{ddsum} yields
\begin{align}
\label{gfirst}\sum_{j:j\neq i}\frac{1}{|\lambda_{i}-\lambda_{j}|}=\sum_{n\geq 0}\sum_{j\in U_n}\frac{1}{|\lambda_i-\lambda_j|}\leq CN^{1+\tau+\delta}.
\end{align} 
Also we have the trivial bound for higher moments
\begin{align}
\label{ghigh}\sum_{j:j\neq i}\frac{1}{|\lambda_{i}-\lambda_{j}|^k}\leq \left(\sum_{j:j\neq i}\frac{1}{|\lambda_{i}-\lambda_{j}|^2}\right)^{k/2}\leq N^{k(1+\tau)},\quad k\geq 2.
\end{align}
We denote by $V$ the matrix whose matrix elements are zero everywhere except at the $(a,b)$ and $(b,a)$ position, where it equals one. From the formula \eqref{cont}, 
\begin{align}
\label{der}\del_{ab}^{(k)}Q_i(A)=\frac{1}{N^2}\Tr \del_{ab}^{(k)}\oint \frac{G}{\lambda_i -z}dz\oint \frac{G}{\lambda_i -w}dw.
\end{align}
Notice that $\del_{ab}^{(k)}G= (-1)^k k! (GV)^kG$. By the Leibniz rule, \eqref{der} can be written as a sum of terms in the following form
\begin{align}
\label{term}\frac{1}{N^2}\Tr \oint (GV)^{k_1}G\del_{ab}^{(k_2)}\frac{1}{\lambda_i-z}dz\oint (GV)^{k_3}G\del_{ab}^{(k_4)}\frac{1}{\lambda_i -w}dw,
\end{align}
where $k_1+k_2+k_3+k_4=k$. We will only prove \eqref{drQ} for $k=3$; that is, we will prove 
\begin{align}
|\del_{ab}^{(3)}Q_i(A)|\leq CN^{5\tau+8\delta} \label{eqn:threeder}
\end{align}
The computations for $k=1,2$ are much easier. To evaluate \eqref{term}, we need to compute the first three derivatives of $\lambda_i(A)$ with respect to the $(a,b)$-th entry of $A$. We use the following formula to compute the derivatives of $\lambda_i$,
\begin{align*}
\lambda_i=-\frac{1}{2\pi i} \Tr \oint \frac{z}{A-z} dz
\end{align*}
where the contour encloses only $\lambda_i$.  The $k$-th derivative with respect to $(a,b)$-th entry is
\begin{align*}
\del_{ab}^{(k)}\lambda_i=-\frac{1}{2\pi i} \Tr \oint z\del_{ab}^{(k)}G dz=\frac{(-1)^{k+1}k!}{2\pi i} \Tr\oint z(GV)^kGdz.
\end{align*}

For $k=1,2,3$ respectively we have
\begin{align}
\label{fd}\del_{ab}\lambda_i&=\frac{1}{2\pi i} \sum_{j=1}^{N}\oint \frac{zu_j^* V u_j}{(\lambda_j-z)^2}dz=u_i^* V u_i,\\
\label{sd}\del_{ab}^{(2)}\lambda_i&=\frac{-2}{2\pi i} \sum_{j_1,j_2} \oint \frac{z(u_{j_1}Vu_{j_2}^*)^2}{(\lambda_{j_1}-z)^2 (\lambda_{j_2}-z)}=2\sum_{j:j\neq i}\frac{(u_i^* Vu_j)^2}{\lambda_i-\lambda_j},\\
\notag\del_{ab}^{(3)}\lambda_i&=\frac{6}{2\pi i} \sum_{j_1,j_2,j_3} \oint \frac{z(u_{j_1}Vu_{j_2}^*)(u_{j_2}Vu_{j_3}^*)(u_{j_3}Vu_{j_1}^*)}{(\lambda_{j_1}-z)^2 (\lambda_{j_2}-z)(\lambda_{j_3}-z)}\\
\label{td}&=6\sum_{j_1, j_2:\atop j_1, j_2\neq i}\frac{(u_i^* Vu_{j_1})(u_{j_1}^*V u_{j_2})(u_{j_2}^*Vu_{i})}{(\lambda_i-\lambda_{j_1})(\lambda_i-\lambda_{j_2})}
-6(u_i^*V u_i)\sum_{j:j\neq i} \frac{(u_i^*Vu_j)(u_j^*Vu_i)}{(\lambda_i-\lambda_j)^2}
\end{align}

Some straightforward but tedious integration similar to \eqref{fd}, \eqref{sd},\eqref{td} reveals that $\del_{ab}^{(3)}Q_i(A)$ is a sum of the following terms (for simplicity of notation we write $V_{jk}:=u_j^* V u_k$):
\begin{align*}
&\sum_{j: j\neq i}\frac{\Omega}{(\lambda_i-\lambda_j)^5},\quad \Omega=V_{ii}^3, V_{ii}^2V_{jj}, V_{ii}V_{jj}^2, V_{jj}^3.\\
&\sum_{j_1,j_2:\atop j_1,j_2\neq i}\frac{\Omega}{(\lambda_i-\lambda_{j_1})^4(\lambda_{i}-\lambda_{j_2})},\quad \Omega= V_{ii}V_{j_1j_2}^2,V_{j_1j_1}V_{ij_2}^2,V_{j_1j_1}V_{j_1j_2}^2, V_{ii}V_{ij_2}^2.\\
&\sum_{j_1,j_2:\atop j_1,j_2\neq i}\frac{\Omega}{(\lambda_i-\lambda_{j_1})^3(\lambda_{i}-\lambda_{j_2})^2},\quad
\Omega= V_{j_2j_2}V_{j_1j_2}^2, V_{ii}V_{ij_1}^2, V_{ii}V_{j_1j_2}^2, V_{j_1j_1}V_{j_1j_2}^2.\\
&\sum_{j_1,j_2,j_3:\atop j_1,j_2,j_3\neq i}\frac{V_{j_1j_2}V_{j_2j_3} V_{j_3j_1}}{(\lambda_i-\lambda_{j_1})^3(\lambda_i-\lambda_{j_2})(\lambda_i-\lambda_{j_3})},
\sum_{j_1,j_2,j_3:\atop j_1,j_2,j_3\neq i}\frac{V_{j_1j_2}V_{j_2j_3} V_{j_3j_1}}{(\lambda_i-\lambda_{j_1})^2(\lambda_i-\lambda_{j_2})^2(\lambda_i-\lambda_{j_3})}.
\end{align*}
Indeed, if one takes a close look at the expression \eqref{term}, the only singularity enclosed by our contour is $\lambda_i$. Therefore by Cauchy's formula, the integral is sum of terms with denominators: $\prod_j (\lambda_i-\lambda_j)$, as appearing in the above expressions. 

Since the eigenvectors of $A$ are completely delocalized we see that
\begin{align}
|V_{jk} | \leq C N^{-1+\delta}.
\end{align}
This, together with the bounds \eqref{gfirst} and \eqref{ghigh} yields \eqref{eqn:threeder}
\end{proof}

\subsection{Proof of Theorem \ref{tlevp}}
By explicit computation, we have
\begin{align}
\label{third}\del_{ab}^{(3)}\left(\chi_M(Q_i(\theta^{ab}H_s))\right)&=\chi_M''' (\del_{ab}Q_i)^3+3\chi_M'' \del_{ab}^{(2)}Q_i\del_{ab}Q_i+\chi_M' \del_{ab}^{(3)}Q_i.
\end{align}
Notice that the right hand side of \eqref{third} vanishes unless $Q_i(\theta^{ab}H_s)\leq M$. Since $H$ is stable, $\theta^{ab}H_s$ satisfies all the assumptions in Lemma \ref{drQ}, with probability larger than $1-N^{-D}$ for any large number $D$. Therefore
\begin{align*}
\PP\left(\left| \del_{ab}^{(3)}\left(\chi_M(Q_i(\theta^{ab}H_s))\right)\right|\leq CN^{12\delta+9\tau} \right)\geq 1-N^{-D}.
\end{align*}
On the complement of the above event we will use the following deterministic bound 
\begin{align*}
\left| \del_{ab}^{(3)}\left(\chi_M(Q_i(\theta^{ab}H_s))\right)\right|\leq CN^{3+9\tau}.
\end{align*}
We want to apply Lemma \ref{compare} with $F = \chi_M \circ Q_i$. For this choice of $F$ we can take $B$ to satisfy
\begin{align*}
B\leq \sup_{0\leq s\leq t\atop 1\leq i,j\leq N} \EE\left[(N^2 |h_{ij}(s)-\f|^3+N|h_{ij}(s)-\f|) CN^{12\delta+9\tau}\right]+CN^{3+9\tau}N^{-D}\leq CN^{1+12\delta+9\tau-\alpha},
\end{align*}
where the last factor $N^{-\alpha}$ is from the third moment of $h_{ij}(s)$. We conclude that
\begin{align}
\label{bd} |\EE[\chi_M(Q_i(H_t))]-\EE[\chi_M(Q_i(H_0))]|\leq Ct N^{1+12\delta+9\tau-\alpha}= C N^{\epsilon+12\delta+9\tau-\alpha}.
\end{align}
Since the two numbers $\epsilon,\delta>0$ can be arbitrarily small, therefore we can choose $\tau=(\alpha-\epsilon-12\delta)/8$ and \eqref{bd} simplifies to
\begin{align}
|\EE[\chi_M(Q_i(H_t))]-\EE[\chi_M(Q_i(H_0))]|\leq CN^{\tau}.
\end{align}
Hence,
\begin{align}
\EE[\chi_M(Q_i(H_0))] \leq C N^{ 3 \tau/2}
\end{align}
and the proof is easily concluded by the Markov inequality.
\qed

\section{Bulk universality of stable random matrices }\label{bulkH}
In this section we prove bulk universality for stable random matrices $H$, i.e., Theorem \ref{TbulkH} by comparing the local statistics between $H$ and $H_t$.   This will yield the theorem as Theorem \ref{thm:htbulk} shows that the latter ensemble exhibits bulk universality. In the following of this section we denote the eigenvalues of $H_t$ by $\lambda_1(t)\leq \lambda_2(t)\leq \cdots \leq \lambda_N(t)$. And so the eigenvalues of $H$ are $\lambda_1(0)\leq \lambda_2(0)\leq \cdots \leq \lambda_N(0)$.

We will obtain the gap universality of $H$ from the more general comparison result below.
\begin{lemma} \label{lem:comparegap} Let $H$ be a stable random matrix and let $H_t$ be defined as in \eqref{DBM}.  Take $t = N^{-1+\eps}$.  Then for $\eps >0$ small enough we have
\begin{align}\label{compg}
\lim_{N\rightarrow \infty}\EE[O(N\lambda_{i_1}(0), \cdots, N\lambda_{i_n}(0))]-\EE[O(N\lambda_{i_1}(t), \cdots, N\lambda_{i_n}(t))]=0,
\end{align} 
for any $i_1,i_2,\cdots, i_n\in [[\kappa N, (1-\kappa)N]]$ and bounded test function $O\in C^{\infty} (\RR^n)$ whose first three derivatives are bounded.
\end{lemma}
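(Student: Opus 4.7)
The natural strategy is to apply the Ornstein--Uhlenbeck continuity lemma, Lemma \ref{compare}, to a suitably regularized version of the observable $F(H) := O(N \lambda_{i_1}(H), \dots, N \lambda_{i_n}(H))$. The obstacle is that $F$ is not smooth on the space of real symmetric matrices because the individual eigenvalues are not smooth at crossings, so we must cut off the region where eigenvalues cluster. The cutoff can then be removed via the level repulsion estimate of Theorem \ref{tlevp}, and the third moment decay $\EE|h_{ij}-\f|^3\leq N^{-1-\alpha}$ absorbs the polynomial cost incurred by the cutoff.

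The plan is to choose $M := N^{2\tau}$ for a small $\tau>0$ and a smooth function $\psi:[0,\infty)\to[0,1]$ equal to $1$ on $[0,1/2]$ and $0$ on $[1,\infty)$, and to set
\begin{equation*}
\widetilde F(H) := O(N\lambda_{i_1}(H),\dots,N\lambda_{i_n}(H))\,\prod_{k=1}^{n}\psi\!\left(Q_{i_k}(H)/M\right),
\end{equation*}
with $Q_i$ as in \eqref{Q}. On the support of the cutoff product, each $\lambda_{i_k}$ is a simple eigenvalue with $|\lambda_{i_k}-\lambda_{i_k\pm 1}|\geq N^{-1-\tau}$, so $\widetilde F$ is smooth everywhere (vanishing identically outside the support). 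The functional $\widetilde F$ is the correct smooth replacement for $F$ because, by Theorem \ref{tlevp} applied to both $H$ and $H_t$, the probability that some $Q_{i_k}$ exceeds $M/2$ is bounded by $CnN^{-\tau/2}$, yielding $|\EE[F(H_s)] - \EE[\widetilde F(H_s)]|\leq C\|O\|_\infty N^{-\tau/2}$ for $s\in\{0,t\}$.

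The main step is then to bound $\sup|\del_{ab}^{(3)}\widetilde F|$ on the support of the cutoff. Using the contour representation \eqref{cont} and the explicit formulas \eqref{fd}--\eqref{td}, together with the eigenvector delocalization $|V_{jk}|\leq CN^{-1+\delta}$ and the dyadic-sum bound $\sum_{j\neq i_k}|\lambda_{i_k}-\lambda_j|^{-1}\leq CN^{1+\tau+\delta}$ (as in \eqref{gfirst}), one obtains
\begin{equation*}
|N\del_{ab}\lambda_{i_k}|\leq CN^{\delta},\qquad |N\del_{ab}^{(2)}\lambda_{i_k}|\leq CN^{\tau+3\delta},\qquad |N\del_{ab}^{(3)}\lambda_{i_k}|\leq CN^{2\tau+5\delta}.
\end{equation*}
Combining with the Leibniz expansion of $\del_{ab}^{(3)}\widetilde F$, the bound on $|\del_{ab}^{(k)}Q_{i_k}|$ from Proposition \ref{bdrQ}, the factors $M^{-1},M^{-2},M^{-3}=N^{-2\tau},\dots$ from differentiating $\psi(\cdot/M)$, and the uniform bound on $\|O\|_{C^3}$, the third derivative satisfies $|\del_{ab}^{(3)}\widetilde F|\leq CN^{P(\tau,\delta)}$ for some polynomial $P$. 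The same deterministic bound holds for $\widetilde F(\theta^{ab}H_s)$ because $\theta^{ab}H_s$ is $\delta$-general with overwhelming probability by hypothesis \ref{stable}; the complementary event contributes a negligible $N^{-D}$.

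Plugging this into Lemma \ref{compare} with $B\leq \EE[(N^2|h_{ab}(s)-\f|^3+N|h_{ab}(s)-\f|)]\cdot CN^{P(\tau,\delta)}\leq CN^{1-\alpha+P(\tau,\delta)}$, we obtain
\begin{equation*}
|\EE[\widetilde F(H_t)]-\EE[\widetilde F(H_0)]|\leq CtB\leq CN^{\eps-\alpha+P(\tau,\delta)}.
\end{equation*}
Choosing $\eps,\tau,\delta$ sufficiently small relative to $\alpha$ makes this vanish as $N\to\infty$. Combined with the cutoff removal bound $CnN^{-\tau/2}$ for both $s=0$ and $s=t$, this proves the desired comparison. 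The main obstacle is the bookkeeping of the polynomial exponents $P(\tau,\delta)$: one must verify that the combined blow-up from eigenvector fluctuations ($N^\delta$) and small gaps ($N^\tau$) in the third derivative is strictly less than $\alpha$, which is guaranteed since the exponents $\eps,\tau,\delta$ may all be chosen as small as desired, independently of $\alpha$.
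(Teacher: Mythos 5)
Your proposal is correct and follows essentially the same route as the paper: smoothly cut off the eigenvalue observable by a function of $Q_{i_k}$ at scale $M=N^{2\tau}$, remove the cutoff via the level repulsion bounds $\PP(Q_{i_k}(H_s)\geq N^{2\tau})\lesssim N^{-\tau/2}$ for $s=0,t$ (established in Section \ref{levelRp}), bound the third derivative of the regularized observable on the $\delta$-general event by $N^{O(\tau+\delta)}$ as in Proposition \ref{bdrQ}, and conclude with the Ornstein--Uhlenbeck continuity Lemma \ref{compare} and the third moment decay $N^{-1-\alpha}$. The only differences are cosmetic (a $\psi(Q/M)$ cutoff instead of $\rho_M(Q)$, general $n$ treated directly rather than $n=1$, and more explicit exponent bookkeeping), so no further comment is needed.
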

\proof 
For simplicity of notation, we only state the proof for $n=1$ case, i.e. for any $i\in [[\kappa N, (1-\kappa)N]]$
\begin{align}\label{oneg}
\lim_{N\rightarrow \infty}\EE[O(N\lambda_{i}(0))]-\EE[O(N\lambda_{i}(t))]=0.
\end{align} 
  
Take a cutoff function $\rho_M$ such that $\rho_M(x)=1$ for $x\leq M$ and $\rho_M(x)=0$ for $x\geq 2M$, where $M=N^{2\tau}$ and $\tau>0$ is a small constant. By the level repulsion of $H$ and $H_t$ from the previous section, we know that
\begin{align*}
\PP(Q_i(H_s)\geq N^{2\tau})\leq N^{-\tau/2}, \quad s=0, t.
\end{align*} 
Since $O\in C^{\infty}(\RR)$ is bounded,  we have that
\begin{align*}
&\left|\EE[O(N\lambda_{i}(0))]-\EE[O(N\lambda_{i}(t))]\right|\\
\leq &\left|\EE[O(N\lambda_{i}(0))\rho_{M}(Q_i(H_0))]-\EE[O(N\lambda_{i}(t))\rho_{M}(Q_i(H_t))]\right|\\
+&\|O\|_{\infty}\left(\PP(Q_i(H_0)\geq N^{2\tau})+\PP(Q_i(H_t)\geq N^{2\tau})\right)\\
\leq &\left|\EE[O(N\lambda_{i}(0))\rho_{M}(Q_i(H_0))]-\EE[O(N\lambda_{i}(t))\rho_{M}(Q_i(H_t))]\right|+\frac{2\|O\|_{\infty}}{N^{\tau/2}}.
\end{align*}
Notice that $O(N\lambda_{i}(A))\rho_{M}(Q_i(A))$ is a well defined smooth function on the space of symmetric functions. Moreover, if the matrix $A$ is $\delta$-general in the sense of Definition \ref{general} 
the same argument as in Proposition \ref{bdrQ} implies
\begin{align*}
\left|\del_{ab}^{(3)}O(N\lambda_{i}(A))\rho_{M}(Q_i(A))\right|\leq CN^{c(\delta+\tau)},
\end{align*}
where $c$ and $C$ are constants. Therefore by Lemma \ref{compare}, we have
\begin{align*}
\left| \EE[O(N\lambda_{i}(0))\rho_{M}(Q_i(H_0))]-\EE[O(N\lambda_{i}(t))\rho_{M}(Q_i(H_t))] \right| \leq CN^{c(\delta+\tau)+\epsilon-\alpha}\rightarrow 0,
\end{align*}
if we take $c(\delta+\tau)+\epsilon<\alpha/2$.
\qed

For the universality of the correlation functions, due to the lack of optimal rigidity, one can not directly deduce universality from \eqref{compg}. We need to prove the following Green function comparison lemma:

\begin{lemma}\label{gcp}
Let $H$ be a stable random matrix as defined in Section \ref{def23}, and let $H_t$ be defined as in \eqref{DBM}. Let $\delta>0$ be arbitrary and choose an $\eta$ with $N^{-1-\delta}\leq \eta \leq N^{-1}$. For any sequence of positive integers $k_1,k_2,\dots,k_n$, any set of complex parameters
$z_j^m = E_j^m\pm \eta$, where $1\leq j\leq k_m$, $1\leq m\leq n$, $|E_j^m| \leq  2-\kappa$,
and the $\pm$ signs are arbitrary, we have the following. Let $G_{t}(z)=(H_t-z)^{-1}$ be the resolvent and let $F(x_1,x_2,\cdots, x_n)$ be a test function such that for any multi-index $\alpha=(\alpha_1,\cdots, \alpha_n)$ with $1\leq |\alpha|\leq 4$ and for any $\omega>0$ sufficiently small, we have
\begin{align}\label{reg1}
\max\left\{|\partial^\alpha F(x_1,x_2,\cdots, x_n)|: \max_{j} |x_{j}|\leq N^{\omega}\right\}\leq N^{C_0 \omega}
\end{align}
and
\begin{align}\label{reg2}
\max\left\{|\partial^\alpha F(x_1,x_2,\cdots, x_n)|: \max_{j} |x_{j}|\leq N^{2}\right\}\leq N^{C_0}
\end{align}
for some constant $C_0$. Then the following holds: 
\begin{align}\label{gcomp}
\left|\EE[F\left(N^{-k_1} \Tr \prod_{j=1}^{k_1}G_t(z_j^1), \cdots, N^{-k_n} \Tr \prod_{j=1}^{k_n}G_t(z_j^n)\right)- \EE[F(G_t\rightarrow G_0)]]\right|\leq C t N^{1 +c \delta-\alpha},
\end{align}
where $c$ and $C$ are constants depending on $\kappa$, $n$, $k_1,k_2,\cdots, k_n$ and $C_0$.  The second term above is the same as the first term, but with $G_0$ replacing $G_t$ everywhere.
\end{lemma}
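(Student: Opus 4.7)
The plan is to apply Lemma \ref{compare} to the functional
\begin{equation*}
F_{\mathrm{comp}}(H) := F\bigl(X_1(H), \ldots, X_n(H)\bigr), \qquad X_m(H) := N^{-k_m} \Tr \prod_{j=1}^{k_m} (H - z_j^m)^{-1}.
\end{equation*}
It then suffices to show that the quantity $B$ appearing in the hypothesis of Lemma \ref{compare} satisfies $B \leq C N^{1+c\delta - \alpha}$, since the conclusion $|\EE[F_{\mathrm{comp}}(H_t)] - \EE[F_{\mathrm{comp}}(H_0)]| \leq CtB$ of that lemma will then give \eqref{gcomp}.

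First I would expand $\del_{ab}^{(3)} F_{\mathrm{comp}}$ using the chain rule and Leibniz rule into a bounded-cardinality sum of terms of the form $F^{(j)}(X) \prod_l \del_{ab}^{(\beta_l)} X_{m_l}$ with $j \leq 3$ and $\sum_l \beta_l = 3$. The derivatives $\del_{ab}^{(\beta)} X_m$ are computed from $\del_{ab} G = -G E_{ab} G$ where $E_{ab} = e_a e_b^T + e_b e_a^T$; after decomposing each inserted $E_{ab}$ in the resulting cyclic traces, $\del_{ab}^{(\beta)} X_m$ becomes $N^{-k_m}$ times a sum of products of $\beta$ matrix entries of the form $(G(z_{i_1}) \cdots G(z_{i_{q_l}}))_{xy}$, with $x, y \in \{a, b\}$ and $\sum_{l=1}^{\beta} q_l = k_m + \beta$.

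The heart of the argument is the bound on such Green-function matrix entries at imaginary part $\eta \in [N^{-1-\delta}, N^{-1}]$, which lies below the scale at which local laws are available. Conditioning on the event that $\theta^{ab} H_s$ is $\delta$-general (which holds with probability $\geq 1 - N^{-D}$ by condition \ref{stable} of Definition \ref{stabledef}), eigenvector delocalization combined with a dyadic decomposition of the eigenvalues around $\Re z$, organized by the non-accumulation condition of Definition \ref{general}, should yield
\begin{equation*}
\Bigl|\bigl(G(z_{i_1}) \cdots G(z_{i_q})\bigr)_{xy} \Bigr| \leq C N^{-1+\delta} \sum_{\alpha} \prod_{l=1}^{q} \frac{1}{|\lambda_\alpha - z_{i_l}|} \leq C N^{q-1+c\delta}.
\end{equation*}
Since $\sum_{l=1}^{\beta}(q_l - 1) = k_m$, this gives $|\del_{ab}^{(\beta)} X_m| \leq C N^{c\delta}$ for $\beta = 1, 2, 3$, and the corresponding trace bound gives $|X_m(H)| \leq C N^{c\delta}$. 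Applying regularity condition \eqref{reg1} with $\omega = c\delta$ then yields $|F^{(j)}(X)| \leq N^{C_0 c\delta}$, so on this high-probability event $|\del_{ab}^{(3)} F_{\mathrm{comp}}| \leq N^{c'\delta}$. On the complementary event, the deterministic bound $\|G(z)\| \leq 1/\eta \leq N^{1+\delta}$ together with \eqref{reg2} gives $|\del_{ab}^{(3)} F_{\mathrm{comp}}| \leq N^C$ for some fixed $C$; since this event has probability at most $N^{-D}$ for arbitrary $D$, its contribution to $B$ is negligible.

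Combining these estimates with the moment bounds $\EE[N^2|h_{ab}(s) - f|^3] \leq N^{1-\alpha}$ from condition \ref{moment} and $\EE[N|h_{ab}(s) - f|] \leq C N^{1/2}$ from the variance assumption (the latter is dominated by the former in the relevant regime $\alpha \leq 1/2$), I obtain $B \leq C N^{1+c\delta - \alpha}$, whence $|\EE[F_{\mathrm{comp}}(H_t)] - \EE[F_{\mathrm{comp}}(H_0)]| \leq CtN^{1+c\delta - \alpha}$. The main obstacle will be the matrix-entry bound on products of Green functions at $\eta$ below the local-law threshold $N^{-1+\delta}$: since concentration for the resolvent is unavailable at this scale, the bound must be driven purely by the structural assumption of $\delta$-generality, and the dyadic sum must carefully distinguish scales below and above $N^{-1+\delta}$ to produce the factor $N^{q-1+c\delta}$ with the $N^{-k_m}$ normalization cancelling the growth cleanly.
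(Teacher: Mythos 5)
Your proposal is correct and follows essentially the same route as the paper: apply Lemma \ref{compare} to the composition of $F$ with the normalized resolvent traces, bound the third derivatives down to $\eta\geq N^{-1-\delta}$ using $\delta$-generality of $\theta^{ab}H_s$ (eigenvector delocalization plus a dyadic decomposition from the non-accumulation condition), handle the exceptional event with the deterministic bound and \eqref{reg2}, and extract the factor $N^{1-\alpha}$ from the third-moment condition. The only cosmetic difference is that you treat general $k_m$ via the spectral decomposition of entries of products of resolvents, while the paper writes out only the representative case $n=k_1=1$ with entrywise bounds on $\Tr(GV)^kG$; the estimates are of the same strength.
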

\begin{proof}
For simplicity of notation, we state the proof only for $n=1$ and $k_1=1$ case, i.e. 
\begin{align*}
\left|\EE[F\left( N^{-1}\Tr G_t(z)\right)]- \EE[F\left( N^{-1}\Tr G_0(z)\right)]\right|\leq C t N^{1 +c \delta-\alpha}.
\end{align*}

We will prove this lemma using Lemma \ref{compare}.  We must compute derivatives of the trace of the Green's function of the deformed matrix $\theta^{ab} H_s$.  We denote the resolvent of $\theta^{ab}H_s$ by $G(z)=(\theta^{ab}H_s-z)^{-1}$. For the derivatives of $G(z)$, we have  $|\Tr \del_{ab}^{(k)} G|=|(-1)^k k!\Tr (GV)^kG|$, where $V$ is the matrix whose matrix elements are zero everywhere except at the $(a,b)$ and $(b,a)$ position, where it equals one. Since $V$ has at most two nonzero elements,  the trace  $(GV)^kG$ contains at most $2^k N$ terms.  Furthermore, each term is a product of $k+1$ entries of $G$, e.g. $\Tr GVG=\sum_{k}(G_{kb}G_{ab}+G_{ka}G_{bk})$.  
We first derive a bound on the resolvent entries $G_{jk}(E+i\eta)$ down to $\eta\geq N^{-1-\delta}$ when $\theta^{ab} H_s$ is $\delta$-general. Using the delocalization of the eigenvectors we have,
\begin{align*}
|G_{jk}(E+i\eta)|\leq \sum_{i=1}^{N}\frac{|u_i(j)u_i(k)|}{|\lambda_i-z|}\leq CN^{-1+\delta}\sum_{i=1}^{N}\frac{1}{|\lambda_i-z|}.
\end{align*}
Define a dyadic decomposition:
\begin{align*}
&U_0=\{j:|\lambda_j-E|\leq  N^{-1+\delta}\},\quad U_{\infty}=\{j: N^{\delta}< |\lambda_j-E|\}\\
&U_n=\{j: 2^{n-1}N^{-1+\delta}< |\lambda_j-E|\leq 2^n N^{-1+\delta}\}, \quad 1\leq n\leq \lceil\log_2N\rceil.
\end{align*}
For $\delta$-general $\theta^{ab}H_s$ we have $|U_n|\leq C2^n N^{\delta}$, for $0\leq n\leq \lceil\log_2 N\rceil$. We can divide the summation over $i$ into $\cup_n U_n$,
\begin{align*}
|G_{jk}(E+i\eta)|
\leq&  CN^{-1+\delta}\sum_{n\geq 0}\sum_{i\in U_n}\frac{1}{|\lambda_i-E-i\eta|}\leq CN^{3\delta},\quad N^{-1-\delta}\leq \eta.
\end{align*}
Therefore we have 
\begin{align*}
\PP(N^{-1}|\Tr \del_{ab}^{(k)} G|\leq CN^{3(k+1)\delta})\geq 1-N^{-D} , \quad N^{-1-\delta}\leq \eta
\end{align*}
When $\theta^{ab}H_s$ is not $\delta$-general we still have the deterministic upper bound
\begin{align*}
N^{-1}|\Tr \del_{ab}^{(k)} G|\leq CN^{3(1+\delta)}.
\end{align*}
Therefore we can take $F(A)= N^{-1}\Tr (A-E-i\eta)^{-1}$ for $N^{-1-\delta}\leq \eta$ in Lemma \ref{compare}.  For $B$ we have the upper bound
\begin{align*}
B\leq &\sup_{0\leq s\leq t,1\leq i,j\leq N} \EE\left[(N^2 |h_{ij}(s)-\f|^3+N|h_{ij}(s)-\f|) CN^{(3C_0+18)\delta}\right]+CN^{3(1+\delta)}N^{-D}\\
\leq &CN^{1+(3C_0+18)\delta-\alpha},
\end{align*}
which yields \eqref{gcomp}.
\end{proof}

Once we have the above lemma, the following theorem from \cite[Theorem 2.1]{ESYY} transforms the information of the Green function to the correlation functions of $H$ and will complete the proof of Theorem \ref{TbulkH}.
\begin{theorem}\label{thm:comparecor}
Let $G(z)$ and $G_t(z)$ denote the Green function of the two matrices $H$ and $H_t$, respectively. Suppose that \eqref{gcomp} holds for the two random matrices $H$ and $H_t$ for any $t=N^{-1+\epsilon}$, where $\epsilon>0$ can be arbitrarily small. Let $\rho_{H}^{(n)}$ and $\rho_{H_t}^{(n)}$ be the $n$-point correlation functions of the eigenvalues w.r.t. the probability laws of the matrices $H$ and $H_t$. Then for any $\kappa>0$, $E\in (-2+\kappa, 2-\kappa)$,  and test function $O\in C_{c}^{\infty}(\RR^n)$ we have 
\begin{align*}
\lim_{N\rightarrow \infty}\int_{\RR^n} O(\alpha_1,\cdots, \alpha_n)\left\{ \rho^{(n)}_{H}\left( E+\frac{\alpha_1}{N},\cdots E+\frac{\alpha_n}{N}\right)-\rho^{(n)}_{H_t}\left( E+\frac{\alpha_1}{N},\cdots E'+\frac{\alpha_n}{N}\right)\right\}=0.
\end{align*}
provided that $t$ is chosen so that $C t N^{1 +c \delta-\alpha}\leq N^{-\alpha/2}$.
\end{theorem}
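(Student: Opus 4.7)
The plan is to rewrite the integrated correlation functions as expectations of a functional of Green function traces at a small but nonzero imaginary part, apply Lemma \ref{gcp} to that functional, and then remove the Green function representation at the end. Fix a small parameter $\delta'>0$ and put $\eta = N^{-1-\delta'}$.

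First I would introduce, for $O \in C_c^\infty(\RR^n)$, the smoothed Green function functional
\begin{align*}
\Phi(A) \;:=\; \int_{\RR^n} O(\alpha_1,\ldots,\alpha_n) \prod_{j=1}^n \frac{1}{\pi}\,\mathrm{Im}\,\frac{1}{N}\Tr\!\left(A - E - \tfrac{\alpha_j}{N} - \i\eta\right)^{-1} d\alpha_1\cdots d\alpha_n.
\end{align*}
Expanding each trace as $\tfrac{1}{N}\sum_{k}\tfrac{\eta/\pi}{(\lambda_k - E - \alpha_j/N)^2 + \eta^2}$ yields a sum over $n$-tuples of eigenvalue indices. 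Splitting this sum into its fully-distinct part and the diagonal contributions (where some indices coincide), the distinct part integrated against $O$ is exactly the $\eta$-Poisson smoothing of the $n$-point correlation function $\rho^{(n)}$ integrated against $O$. The diagonal contributions involve at most $n-1$ distinct eigenvalues and pick up explicit factors of $N\eta = N^{-\delta'}$ from the Lorentzian at coinciding indices; in the bulk, where the average density is bounded by the weak local law \eqref{loclawHt1}, they contribute $O(N^{-c\delta'})$. A Lipschitz estimate (the rescaled $O$ lives on the scale of order $1$ in $\alpha$ while the Poisson kernel has width $N\eta = N^{-\delta'}$ in that coordinate) shows that the smoothing itself perturbs $\int O\cdot\rho^{(n)}$ by $O(N^{-c\delta'})$. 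Hence
\begin{align*}
\mathbb{E}[\Phi(H)] \;=\; N^{-n}\!\int\! O(\alpha)\,\rho_H^{(n)}\!\left(E+\tfrac{\alpha_1}{N},\ldots,E+\tfrac{\alpha_n}{N}\right) d\alpha \;+\; o_N(1),
\end{align*}
and the identical statement with $H$ replaced by $H_t$.

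Next, $\Phi(A)$ is a polynomial of degree $n$ in the normalized traces $N^{-1}\Tr G(z_j)$ at $z_j = E + \alpha_j/N + \i\eta$, averaged against the smooth compactly supported weight $O(\alpha)$. Because $O$ is bounded with bounded derivatives and the polynomial has fixed degree, the resulting test function satisfies the polynomial growth bounds \eqref{reg1}--\eqref{reg2} of Lemma \ref{gcp}. Applying that lemma with $t = N^{-1+\eps}$ gives
\begin{align*}
\big|\mathbb{E}[\Phi(H)] - \mathbb{E}[\Phi(H_t)]\big| \;\leq\; C t N^{1+c\delta'-\alpha} \;\leq\; N^{-\alpha/2},
\end{align*}
provided $\delta'$ and $\eps$ are chosen so that $\eps + c\delta' < \alpha/2$. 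Combining this with the preceding step via the triangle inequality yields the claimed vanishing of $\int O\cdot(\rho_H^{(n)} - \rho_{H_t}^{(n)})$ as $N\to\infty$.

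The main obstacle will be the smoothing step: one must simultaneously (i) dispose of the diagonal contributions that arise when the $n$ trace-indices coincide and (ii) justify that Poisson convolution at scale $\eta\ll 1/N$ barely changes $\int O\cdot\rho^{(n)}$ in the microscopic coordinate. Both are controlled using as key inputs the boundedness of the bulk density, which follows from \eqref{loclawHt1}, together with the observation that the test function lives on the microscopic scale $1/N$ while the smoothing is a factor $N^{-\delta'}$ sharper. Once these reductions are in place, the Green function comparison supplied by Lemma \ref{gcp} closes the argument.
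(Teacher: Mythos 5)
Your overall architecture (smooth the correlation functions at a scale $\eta=N^{-1-\delta'}$ slightly below $1/N$, rewrite the result as a Green-function observable, and invoke Lemma \ref{gcp}) is the right one — note, though, that the paper does not prove this theorem at all: it imports it as \cite[Theorem 2.1]{ESYY}, and the cited proof follows the scheme you are trying to reconstruct. However, your step (i), discarding the coincidence terms, contains a genuine error. With $\eta\ll 1/N$ the diagonal contributions are \emph{not} $O(N^{-c\delta'})$; they are generically of order one. Take $n=2$ and $k_1=k_2=k$: if $\lambda_k$ lies within $C/N$ of $E$, then
\begin{align*}
\frac{1}{N^2}\int O(\alpha_1,\alpha_2)\,\theta_\eta\!\left(\lambda_k-E-\tfrac{\alpha_1}{N}\right)\theta_\eta\!\left(\lambda_k-E-\tfrac{\alpha_2}{N}\right)d\alpha_1\,d\alpha_2 \asymp \frac{1}{N^2}\cdot N\cdot N = O(1),
\end{align*}
since each $\alpha_j$-integral captures essentially the full unit mass of the kernel (width $N\eta=N^{-\delta'}\ll1$ in the $\alpha$-coordinate) times the Jacobian $N$, and in the bulk there is typically an eigenvalue at distance $O(1/N)$ from $E$. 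The $1/(N\eta)$-type suppression you invoke only appears when the smoothing scale exceeds the eigenvalue spacing, i.e.\ $\eta\gg 1/N$ — but then your step (ii) fails at fixed energy, which is exactly the tension that forces $\eta$ below $1/N$ here.

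The repair is not to estimate the coincidence terms but to keep them: write the distinct-index sum by inclusion--exclusion over set partitions of $\{1,\dots,n\}$, so that each block $S$ produces an observable of the form $\frac{1}{N}\Tr\prod_{j\in S}\bigl(H-E-\tfrac{\alpha_j}{N}\mp \i\eta\bigr)^{-1}$, i.e.\ a trace of a \emph{product} of resolvents at several spectral parameters with arbitrary $\pm$ signs. This is precisely why Lemma \ref{gcp} is formulated for $N^{-k}\Tr\prod_{j=1}^{k}G(z_j^m)$ with arbitrary signs rather than for single traces only: every term in the expansion, including all coincidence terms, is then compared between $H$ and $H_t$ by that lemma, and nothing needs to be shown small (an equivalent bookkeeping is an induction on $n$, since the coincidence terms are lower-order correlation-type quantities that are themselves compared). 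Your remaining ingredients — the smoothing error at scale $N\eta=N^{-\delta'}$ controlled via the local law and the heavy Poisson tails, and the verification of \eqref{reg1}--\eqref{reg2} after applying the lemma pointwise in $\alpha$ and integrating over the compact support of $O$ — are standard and fine, but as written the argument does not close without the partition expansion.
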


\subsection{Proof of Theorem \ref{TbulkH}}
The universality of the gap statistics follows from Theorem \ref{thm:htbulk} and Lemma \ref{lem:comparegap}. The universality of the correlation functions follows from Theorem \ref{thm:htbulk}, Lemma \ref{gcp} and Theorem \ref{thm:comparecor}.
\qed

\section{Universality of sparse random matrices}\label{example}
In this section we prove Theorem \ref{bsparse}, the bulk universality of sparse matrices, by checking that sparse  matrices satisfy the hypotheses of Definition \ref{stabledef} of stable random matrices. In the following we collect some facts about sparse matrices proved in \cite{EKYYa}.

\begin{theorem}\label{locallaw}
Let $H$ be a sparse Wigner matrix as in Definition \ref{def:snrm}. We denote the eigenvalues of $H$ as $\lambda_1\leq \lambda_2\leq \cdots\leq \lambda_N$, the corresponding eigenvectors $u_1,u_2,\cdots, u_N$, and the resolvent $G(E+i\eta)=(H-E-i\eta)^{-1}$. Then for all (small) $\omega, \delta>0$, (large) $D>0$ and large enough $N\geq N(\omega,\delta,D)$ the following holds with probability larger than $1-N^{-D}$.
\begin{enumerate}[label={\upshape(\roman*)}]
\item All eigenvalues of $H$ are in the interval $[-3,3]$, i.e. $-3\leq \lambda_1\leq \lambda_2\cdots\leq \lambda_{N-1}\leq 3$, except for the largest eigenvalue $\lambda_N$. 
\item We have the weak local semi-circle law for individual resolvent entries: there exists a constant $C$ such that $|G_{jk}(E+i\eta)|\leq C$, for $\eta \geq N^{-1+\delta}$\label{Gbound}
\item We have the local semi-circle law for the Stieltjes transform of eigenvalues of $H$:
\begin{align}\label{locLaw}
\left|m_N(E+i\eta)-m_{sc}(E+i\eta)\right|\leq N^{\omega} \left( \frac{1}{q}+\frac{1}{N\eta}\right),\quad |E|\leq 5, N^{-1+\delta}\leq \eta\leq 10.
\end{align}
\end{enumerate}
\end{theorem}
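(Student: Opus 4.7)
The plan is to deduce Theorem \ref{locallaw} by reducing the three claims to the corresponding statements about the centered sparse matrix $B$ (for which $H=B+f|\ev\rangle\langle \ev|$) that have already been established in \cite{EKYYa}, and then transferring those estimates to $H$ via the rank-one perturbation formula and Cauchy interlacing. The moment conditions \eqref{momentbound} on $B$ are precisely those assumed in \cite{EKYYa}, so the entrywise and averaged local semicircle laws for $B$ apply directly. The only real work is to show that the rank-one bump $f|\ev\rangle\langle \ev|$ leaves the bulk estimates intact.

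First I would treat (iii). The averaged local law for $B$ proved in \cite{EKYYa} gives $|m_N^B(z)-\msc(z)|\leq N^\omega(1/q + 1/(N\eta))$ on the spectral domain $|E|\leq 5$, $N^{-1+\delta}\leq \eta\leq 10$. Since $H$ and $B$ differ by a rank-one matrix, Cauchy interlacing yields $|m_N^H(z)-m_N^B(z)|\leq C/(N\eta)$ for any $z$ with $\Im z=\eta$, which is absorbed into the right-hand side of \eqref{locLaw}. For (ii), I would invoke the entrywise local law of \cite{EKYYa}, which states $|G^B_{jk}-\msc\delta_{jk}|\leq N^\omega(1/q+\sqrt{\Im\msc/(N\eta)}+1/(N\eta))$. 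Since $\msc$ is bounded on the relevant domain and the error term is $o(1)$ for $\eta\geq N^{-1+\delta}$ and $q\geq N^\alpha$, we have $|G^B_{jk}|\leq C$. To pass to $G^H$, I would use the Sherman–Morrison identity
\begin{align}
G^H(z)=G^B(z)-\frac{f\,G^B(z)|\ev\rangle\langle \ev| G^B(z)}{1+f\,\langle \ev|G^B(z)|\ev\rangle},
\end{align}
and observe that $\langle \ev|G^B(z)|\ev\rangle$ is close to $\msc(z)$ (by averaging the entrywise law over $\ev$), so the denominator is bounded away from zero except possibly on a tiny spectral window near the outlier at $\approx f+1/f$; for $|E|\leq 5$ and $f$ either small or well separated, this window is avoided and the correction is itself $O(1)$ entrywise, preserving the bound $|G^H_{jk}|\leq C$.

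For (i), I would combine the rigidity estimate for $B$ in \cite{EKYYa} (which places all eigenvalues of $B$ in $[-2-\kappa,2+\kappa]$ for any fixed $\kappa$ with overwhelming probability, using \eqref{locLaw} at $\eta\sim N^{-1+\delta}$ and a standard argument converting the bulk Stieltjes transform bound into an eigenvalue counting bound) together with Cauchy interlacing for the rank-one update. Interlacing forces at least $N-1$ of the eigenvalues of $H$ to lie in the interval containing the spectrum of $B$, i.e.\ in $[-3,3]$ for $N$ large; the remaining eigenvalue $\lambda_N(H)$ is the outlier created by the perturbation, located near $f+1/f$, which may well exceed $3$ since $f$ can be as large as $N^{1/2}$. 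The main obstacle is the verification that the Sherman–Morrison denominator in step (ii) does not become dangerously small in the regime where $f$ is moderate (so that the outlier might sit near the bulk edge); this is handled by restricting to $|E|\leq 5$ together with the a priori control on $\langle \ev|G^B|\ev\rangle-\msc$ provided by the averaged local law, which keeps $|1+f\langle \ev|G^B|\ev\rangle|$ bounded below away from the outlier location.
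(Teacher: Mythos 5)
First, note that the paper does not actually prove Theorem \ref{locallaw}: it is quoted verbatim from \cite{EKYYa}, where the local law is established \emph{directly} for the non-centered sparse matrix $H=\M+f\vert\ev\rangle\langle\ev\vert$ (including the bound on individual resolvent entries and the location of the spectrum). Your proposal instead tries to rederive it from the centered-matrix results of \cite{EKYYa} by a rank-one perturbation argument. Parts (i) and (iii) of your reduction are sound: eigenvalue interlacing for a rank-one update does give $|m_N^H(z)-m_N^{\M}(z)|\leq C/(N\eta)$ and places $\lambda_1(H),\dots,\lambda_{N-1}(H)$ inside the interval containing $\spc(\M)$.

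The genuine gap is in part (ii). Your Sherman--Morrison argument needs quantitative control of the quadratic forms $\langle\ev\vert G^{\M}(z)\vert\ev\rangle$ and $\langle e_j\vert G^{\M}(z)\vert\ev\rangle$, and these do \emph{not} follow from ``averaging the entrywise law over $\ev$'': summing the entrywise errors over the $N^2$ off-diagonal entries gives an error of size $N$ times the entrywise error, which is useless; one needs an isotropic-type local law for $\M$ (or the fluctuation-averaging mechanism), which is essentially the extra work \cite{EKYYa} does to treat the non-centered matrix in the first place. Moreover, your claim that the small-denominator window ``is avoided'' for $|E|\leq 5$ is false: $f$ is only assumed to lie in $[0,N^{1/2}]$, so for $f$ of order one the outlier location $f+1/f$ lies inside $(2,5]$, and for $f$ close to $1$ it sits at the bulk edge. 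Near that energy, at $\eta=N^{-1+\delta}$, the denominator $1+f\langle\ev\vert G^{\M}\vert\ev\rangle$ is genuinely as small as order $f\eta$, and boundedness of $G^H_{jk}$ then hinges on the numerator being of size $f\cdot O(1/N)$, i.e.\ on the isotropic estimates $\langle e_j\vert G^{\M}\vert\ev\rangle=\msc\langle e_j,\ev\rangle+o(N^{-1/2})$ with errors sharp enough to beat the $1/(N\eta)$-type terms; worst-case entrywise bounds give only $O(N^{-\delta})$ per inner product, which after dividing by the denominator produces a bound that diverges like a power of $N$. So as written the argument fails exactly in the regime where the outlier is inside, or near the edge of, the spectral window, and repairing it amounts to reproving the non-centered local law of \cite{EKYYa} rather than shortcutting it.
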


\begin{remark}
In fact Theorem \ref{locallaw} is still true for generalized sparse matrices, in which we allow the variance of each entry $\EE[(h_{ij}-f)^2]=s_{ij}$ to be different, given that: (1) they are of the same order, i.e. there exists some constants $c_1$ and $c_2$, such that $c_1N^{-1}\leq s_{ij}\leq c_2 N^{-1}$, (2) $\sum_{j=1}^{N}s_{ij}=1$.
\end{remark}

\begin{theorem} Sparse random matrices in the sense of Definition \ref{def:snrm} are stable.  Therefore, Theorem \ref{bsparse} holds.

\end{theorem}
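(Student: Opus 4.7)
The plan is to verify the four conditions of Definition \ref{stabledef} for $H$ as in Definition \ref{def:snrm}, after which Theorem \ref{bsparse} will follow from Theorem \ref{TbulkH}.

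First, the easy conditions. Condition \ref{ind} (independence) is immediate from Definition \ref{def:snrm}. For condition \ref{moment} (third-moment decay), the moment estimate \eqref{momentbound} with $k=3$ gives $\EE[|\m_{ij}|^3] \leq C^3 / (Nq) \leq C^3 N^{-1-\alpha}$, since $q \geq N^\alpha$. So condition \ref{moment} holds with the same $\alpha$.

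Second, condition \ref{loclaw} (weak local semicircle law for $H_t^{(1)}$). From the explicit form of $H_t^{(1)}$ in \eqref{modOU}, its entries are independent up to symmetry, have mean $\f$, and have variances $s_{ij}^{(1)}(t) = s_{ij}(1 + O(t))$. Each $(H_t^{(1)})_{ij} - \f$ is a sum of an exponentially damped copy of $h_{ij}(0) - \f$ and an independent Gaussian of variance $O(t/N)$; the Gaussian contribution is negligible compared to the sparse bound $1/(Nq^{k-2})$ for the range of $k$ in \eqref{momentbound}, so the moment bounds are preserved up to constants. Thus the generalized form of Theorem \ref{locallaw} indicated in the Remark applies to $H_t^{(1)}$, yielding \eqref{loclawHt1} with $c(\delta) = \min\{\alpha, \delta\}/2$, say. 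The spectral containment $\spc(H_t^{(1)}) \subseteq (-3, N^C)$ follows because $\f\,\vert \ev\rangle\langle \ev\vert$ contributes one eigenvalue of size $O(\f) \leq O(N^{1/2})$, while part (i) of Theorem \ref{locallaw} confines the remaining eigenvalues to $[-3,3]$.

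Third, condition \ref{stable} ($\delta$-generality of $\theta^{ab} H_s$ uniformly in $s$). The deformed matrix $\theta^{ab} H_s$ is itself a sparse random matrix in the sense of Definition \ref{def:snrm}: independence is preserved, and multiplying the single fluctuation $h_{ab}(s) - \f$ by $\theta^{ab}_{ab} \in [0,1]$ only tightens its moment bounds. Hence Theorem \ref{locallaw} applies to $\theta^{ab} H_s$. The Green function bound in part (ii) at scale $\eta = N^{-1+\delta}$ yields eigenvector delocalization via the standard estimate $|u_i(j)|^2 \leq \eta\, \mathrm{Im}[G_{jj}(\lambda_i + \i\eta)] \leq C N^{-1+\delta}$. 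Eigenvalue non-accumulation follows from $\#\{i : \lambda_i \in I\} \leq C N |I|\, \mathrm{Im}[m_N(E + \i|I|)] \leq C N |I|$ for intervals $I$ centered at $E$ with $|I| \geq N^{-1+\delta}$, using \eqref{locLaw} and the fact that $\mathrm{Im}[\msc]$ is bounded in the bulk. Uniformity over $s \in [0,t]$ and over the $O(N^2)$ choices of $(a,b)$ follows from a union bound over a polynomially dense grid in $s$, combined with the $O(N^{-100})$ Lipschitz continuity of the spectrum in $s$ via \eqref{fDBM} between adjacent grid points (the grid mesh being absorbed by choosing $D$ large in Definition \ref{general}).

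With all four conditions verified, Theorem \ref{TbulkH} yields Theorem \ref{bsparse}. The main obstacle is really just condition \ref{loclaw}: one needs the generalization of Theorem \ref{locallaw} from \cite{EKYYa} that allows non-constant variances $s_{ij}$, which is precisely the content of the Remark following Theorem \ref{locallaw}. The remaining conditions are direct consequences of the results at hand together with elementary manipulations.
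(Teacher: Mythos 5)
Your overall structure matches the paper's: verify the four conditions of Definition \ref{stabledef} using Theorem \ref{locallaw} and then invoke Theorem \ref{TbulkH}. Conditions \ref{ind} and \ref{moment} are handled exactly as in the paper, and your derivation of delocalization and eigenvalue non-accumulation from bounded resolvent entries is the paper's argument. But there is a genuine gap at the key step of condition \ref{stable}: you assert that $\theta^{ab}H_s$ ``is itself a sparse random matrix in the sense of Definition \ref{def:snrm}'' because scaling the fluctuation $h_{ab}(s)-\f$ by $\theta^{ab}_{ab}\in[0,1]$ only tightens the moment bounds. This is false as stated: Definition \ref{def:snrm} requires $\EE[|\m_{ij}|^2]=\tfrac1N$ exactly, and the generalized version in the Remark after Theorem \ref{locallaw} requires $c_1N^{-1}\leq s_{ij}$ together with $\sum_j s_{ij}=1$; for $\theta^{ab}_{ab}$ near $0$ the deformed entry has variance near $0$, so neither hypothesis holds and Theorem \ref{locallaw} does not apply directly to $\theta^{ab}H_s$. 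This is precisely the point where the paper does real work: it writes $\theta^{ab}H_s=H_s-V$ with $V$ a rank-two matrix whose nonzero entry is $(1-\theta^{ab}_{ab})h_{ab}(s)$, uses the moment bound to get $|(1-\theta^{ab}_{ab})h_{ab}(s)|\leq N^{-\alpha/2}$ with high probability, and then bootstraps the entrywise resolvent bound via the expansion $(\theta^{ab}H_s-z)^{-1}_{jk}=G_{jk}+(GV(\theta^{ab}H_s-z)^{-1})_{jk}$, which gives $\max_{j,k}|(\theta^{ab}H_s-z)^{-1}_{jk}|\leq C+\tfrac12\max_{j,k}|(\theta^{ab}H_s-z)^{-1}_{jk}|$ and hence the bound $2C$ after rearranging. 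Some such perturbation argument (or a re-proof of the local law insensitive to a single entry) is needed; your proposal simply skips it.

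A secondary, smaller issue is condition \ref{loclaw}: the Remark's hypothesis $\sum_j s_{ij}=1$ also fails for $H_t^{(1)}$, whose diagonal and off-diagonal variances are $e^{-t}N^{-1}$ and $(1+e^{-t})/(2N)$, so the row sums are $1-O(t)$. The paper fixes this by rescaling $H_t^{(1)}$ by a factor $1+O(t)$ so that the generalized local law applies, and then uses the stability estimate \eqref{msc} for $\msc$ to absorb the rescaling at the cost of an extra $O(\sqrt t)$ error in \eqref{loclawHt1}; your statement that the variances are $s_{ij}(1+O(t))$ and ``the generalized form applies'' elides this normalization step (your final choice of $c(\delta)$ should also account for the $\sqrt{t}=N^{-(1-\eps)/2}$ term, as the paper does). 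Your grid-in-$s$ union bound is unnecessary, since Definition \ref{stabledef}\ref{stable} only requires the high-probability statement for each fixed $s$, uniformly in the constants, which is how it enters the supremum in Lemma \ref{compare}; this is harmless but not needed.
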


\begin{proof}
Conditions \ref{ind} and \ref{moment} of Definition \ref{stabledef}, i.e., the independent entries and moment conditions, follow from the definition of a sparse matrix. The fact that $\theta^{ab}H_s$ is $\delta$-general, i.e., condition \ref{stable}, will follow from the bounds \ref{Gbound} in Theorem \ref{locallaw}. If the resolvent elements of $\theta^{ab}H_s$ are bounded down to the scale $\eta\geq N^{-1+\delta}$ then by taking $E=\lambda_i$ and $\eta=N^{-1+\delta}$ in the following identity
\begin{align}
C\geq \label{resod}\text{Im } G_{jj}(E+i\eta)=\sum_{i=1}^{N} \frac{\eta |u_i(j)|^2}{(\lambda_{i}-E)^2+\eta^2}\geq |u_i(j)|^2\eta^{-1},
\end{align}
we see that $|u_i(j)|^2\leq CN^{-1+\delta}$ and so the eigenvectors of $\theta^{ab}H_s$ are completely delocalized. For any interval $I=[E-\eta, E+\eta]$, such that $N^{-1+\delta}\leq \eta$, we have
\begin{align*}
C\geq \text{Im } m(E+i\eta)=\frac{1}{N} \sum_{i=1}^{N} \frac{\eta}{(\lambda_i-E)^2+\eta^2}
\geq \frac{1}{N} \sum_{\lambda_i\in I} \frac{\eta}{(\lambda_i-E)^2+\eta^2}
\geq \frac{1}{|I| N}\#\{i: \lambda_i\in I\}.
\end{align*}
Therefore, we get that $\#\{i: \lambda_i\in I\}\leq C|I|N$ by rearranging the above expression.

It therefore suffices to prove that the resolvent entries of the deformed matrix $\theta^{ab} H_s$ are bounded down to the scale $\eta = N^{-1+\delta}$.  For each $s$, $H_s$ is a sparse matrix and so by Theorem \ref{locallaw} we know that 
its resolvent entries are bounded with probability greater than $1-N^{-D}$ for any large $D$.

The deformed matrix $\theta^{ab}H_s$ is a rank two perturbation of $H_s$, i.e. $\theta^{ab}H_s=H_s-V$, where $V$ is the matrix whose matrix elements are zero everywhere except at the $(a,b)$ and $(b,a)$ position, where it equals $(1-\theta^{ab}_{ab})h_{ab}(s)$. By our assumption on the moments of sparse Wigner matrix \eqref{momentbound}, we have that
\begin{align}
\PP \left( |(1-\theta^{ab}_{ab})h_{ab}(s)|\leq N^{-\alpha/2} \right) \geq 1 - N^{-D}
\end{align}
for any large $D$. We denote the resolvent of $H_s$ as $G=(H_s-z)^{-1}$. The resolvent elements of $\theta^{ab}H_s$ are given by the formula
\begin{align*}
|(\theta^{ab}H_s-z)_{jk}^{-1}|=&\left|G_{jk}+(GV(\theta^{ab}H_s-z)^{-1})_{jk}\right|\\
\leq& C+2CN^{-\alpha/2} \max_{1\leq n,l\leq N}|(\theta^{ab}H_s-z)^{-1}_{nl}|\\
\leq& C+ \frac{1}{2}\max_{1\leq n,l\leq N}|(\theta^{ab}H_s-z)^{-1}_{nl}|
\end{align*}
with probability greater than $1 - N^{-D}$, for $N$ sufficiently large. We get $\max_{j,k}|(\theta^{ab}H_s-z)_{jk}^{-1}|\leq 2C$, by taking maximum over $j$ and $k$ in above estimate, and rearranging it. This finishes the proof that with probability larger than $1-N^{-D}$ for any (large) number $D$, $\theta^{ab}H_s$ is general. 

For the local semi-circle law of $H_t^{(1)}$, i.e., condition \ref{loclaw}, the variance of the diagonal terms and off-diagonal terms of $H_t^{(1)}$ are $e^{-t}N^{-1}$ and  $(1+e^{-t})/2N$ respectively. Therefore after normalizing by $(1-(1-e^{-t})\frac{N+1}{2N})^{-1}= 1+O(t)$, it is a generalized sparse matrix. By \eqref{msc}, the normalization factor gives us an error of order at most $O(\sqrt{t})$: with probability larger than $1-N^{-D}$,
\begin{align*}
\left|m_N^{(1)}(z)-m_{sc}(z)\right|\leq N^{\omega} \left( \frac{1}{q}+\frac{1}{N\eta}+\sqrt{t}\right),\quad |E|\leq 4, N^{-1+\delta}\leq \eta \leq 10.
\end{align*}
where $m_N^{(1)}(z)$ is the Stieltjes transform of the empirical eigenvalue distribution of $H_t^{(1)}$. Therefore we can take $c(\delta)=\frac{1}{2}\min\{\alpha, \delta, \frac{1-\epsilon}{2}\}$ in \eqref{loclawHt1}.

We have checked that sparse random matrices are stable, and so Theorem \ref{bsparse} now follows from Theorem  \ref{TbulkH}.
\end{proof}


\bibliography{References}{}
\bibliographystyle{abbrv}

\end{document}